\documentclass{amsart}
\usepackage[margin=1in]{geometry}
\usepackage{xcolor}
\usepackage{blkarray}
\usepackage{graphicx}
\usepackage{latexsym,wasysym, mathrsfs, mathtools,hhline,amssymb,amsthm}
\usepackage{algorithm}
\usepackage{bm}
\usepackage{algpseudocode}
\usepackage{enumerate}
\usepackage[capitalise,noabbrev]{cleveref}
\let\oldtextit\textit 
\renewcommand\emph[1]{\oldtextit{\color{RoyalBlue}#1}}
\definecolor{RoyalBlue}{cmyk}{1, 0.50, 0, 0}
\theoremstyle{definition}
\newtheorem{thm}{Theorem}[section]
\newtheorem{example}{Example}[section]
\newtheorem{definition}[example]{Definition}
\newtheorem{remark}[example]{Remark}

\newtheorem{lemma}[example]{Lemma}
\newtheorem{corollary}[example]{Corollary}

\newcommand{\CC}{{\mathbb C}}

\newcommand{\NN}{{\mathbb N}}

\newcommand{\cF}{{\mathcal F}}
\newcommand{\cG}{{\mathcal G}}
\newcommand{\cH}{{\mathcal H}}
\newcommand{\cP}{{\mathcal P}}
\newcommand{\cQ}{{\mathcal Q}}
\newcommand{\cA}{{\mathcal A}}
\newcommand{\cB}{{\mathcal B}}
\newcommand{\cC}{{\mathcal C}}

\newcommand{\bS}{{\mathbb S}}

\newcommand{\smallEpsilon}{{\varepsilon_-}}
\newcommand{\largeEpsilon}{{\varepsilon_+}}
\newcommand{\smallR}{{R_-}}
\newcommand{\largeR}{{R_+}}

\newcommand{\mainfilecheck}[1]{1}

\ifthenelse{\mainfilecheck{1} > 0}{\title[Isolating clusters of zeros using arbitrary-degree inflation]{Isolating clusters of zeros of analytic systems using arbitrary-degree inflation}}{\title{Isolating clusters of zeros of analytic systems using arbitrary-degree inflation}}

\ifthenelse{\mainfilecheck{1} > 0}{\author{Michael Burr}
	\address{School of Mathematical and Statistical Science, Clemson University, 220 Parkway Drive, Clemson, SC 29634}
	\email {burr2@clemson.edu}\urladdr{https://cecas.clemson.edu/~burr2/}

	\author{Kisun Lee}
	\address{Department of Mathematics, University of California San Diego, 9500 Gilman Drive, La Jolla, CA 92093}
	\email {kil004@ucsd.edu}\urladdr{https://klee669.github.io}
	
	\author{Anton Leykin}
	\address{School of Mathematics, Georgia Institute of Technology, 686 Cherry Street, Atlanta, GA 30308}
	\email {leykin@math.gatech.edu}\urladdr{https://antonleykin.math.gatech.edu}}{
	\author{Michael Burr}
	\affiliation{%
		\institution{Clemson University}
		\streetaddress{O-110 Martin Hall, Box 340975}
		\city{Clemson} 
		\state{South Carolina} 
		\country{United States}
		\postcode{29634}
	}
	\email{burr2@clemson.edu}
	
	\author{Kisun Lee}
	\affiliation{%
		\institution{University of California San Diego}
		\streetaddress{9500 Gilman Drive}
		\city{La Jolla} 
		\state{California} 
		\country{United States}
		\postcode{92093}
	}
	\email{kil004@ucsd.edu}
	
	\author{Anton Leykin}
	\affiliation{%
		\institution{Georgia Institute of Technology}
		\streetaddress{686 Cherry Street}
		\city{Atlanta} 
		\state{Georgia} 
		\country{United States}
		\postcode{30332-0160}
	}
	\email{leykin@math.gatech.edu}}

\date{}

\begin{document}
	
	\begin{abstract}
		Given a system of analytic functions and an approximation to a cluster of zeros, we wish to construct two regions containing the cluster and no other zeros of the system.  The smaller region tightly contains the cluster while the larger region separates it from the other zeros of the system.  We achieve this using the method of inflation which, counterintuitively, relates it to another system that is more amenable to our task but whose associated cluster of zeros is larger.  
	\end{abstract}
	
	\ifthenelse{\mainfilecheck{1} > 0}{
	}
	{\settopmatter{printfolios=true}}
	\maketitle
	
	\section{Introduction}
	Suppose that $\cF$ is a system of $m$ \emph{analytic function} in $n$ unknowns, where $m\geq n$, and $z^\ast\in\CC^n$ is a point near several isolated zeros of $\cF$, i.e., $z^\ast$ approximates a \emph{cluster} of zeros of $\cF$.  The \emph{zero cluster isolation problem} is to compute two closed regions $\smallR$ and $\largeR$ and a positive integer $c$ such that
	\begin{enumerate}
		\item $z^\ast\in \smallR\subseteq \largeR^\circ$, where $\largeR^\circ$ is the interior of $\largeR$ and
		\item the number of zeros of $\cF$ is the same in both $\smallR$ and $\largeR$ and equals $c$.
	\end{enumerate}
	In other words, $\smallR$ encircles a cluster of $c$ zeros of $\cF$, and this cluster of zeros is isolated from the other zeros of $\cF$ by $\largeR\setminus \smallR$.  We also consider the relaxation where $c$ is an upper bound on the number of zeros in $\smallR$ and $\largeR$.
	
	We develop the method of \emph{inflation}, which applies in the square system case ($m=n$) and gives the exact count $c$ when it succeeds.  When inflation fails and in the overdetermined case, we provide a method that yields an upper bound on the size of the cluster.
	
	At a high level, we have the following steps:
	\begin{enumerate}
		\item From the given system $\cF$, find a \emph{nearby} system $\cG$ with a singularity at $z^\ast$, 
		\item compute the structure of the singularity of $\cG$ at $z^\ast$, and
		\item use the relationship between $\cF$ and $\cG$ to infer the location and count of the zeros of $\cF$ near $z^\ast$ from the structure of the singularity of $\cG$ at $z^\ast$.
	\end{enumerate}
	
	The word nearby should only be used in a colloquial and motivational sense since we do not provide a metric for identifying nearness.  We consider both numerical and symbolic perturbations of $\cF$ to generate $\cG$, but we require the final computation to be \emph{certified}.  In other words, as part of their computations, our algorithms not only generate both the integer $c$ and the regions $\smallR$ and $\largeR$, but they also provide a proof of correctness, showing that $\smallR$, $\largeR$, and $c$ have the required properties.
	
	Since all of our constructions and computations pertain to a small neighborhood of one point and tolerate small perturbations of functions in that neighborhood, one may replace analytic functions with \emph{polynomials} as long as there is an effective way to estimate the difference with the original functions.  Hence, we focus on the polynomial case throughout the remainder of the paper.
	
	\subsection{Motivation and contribution}
	Many numeric and symbolic algorithms struggle with computing or approximating zeros of zero-dimensional systems of polynomials that are either singular or clustered.  For some algorithms, however, providing information about the clusters, such as their sizes, locations, and distances from the other zeros, can be used to restore the efficiency of these algorithms \cite{dedieu2001simple,hao2020isolation}.  In addition, data about these clusters can also be used to derive more precise estimates on the algorithmic complexity of algorithms, see, for example, \cite{Sagraloff:2012,Becker2016,Becker2018,BatraSharma:2019}.
	
	Our main contribution is in generalizing the technique dubbed \emph{inflation} and introduced by the first and third authors in \cite{inflation:2021}. Counterintuitively, the inflation procedure transforms a square system with a multiple zero into a square system with the same multiple zero but of \emph{higher} multiplicity.
	
	In~\cite{inflation:2021}, a notion of a \emph{regular zero of order $d$} is defined. In this paper we define a \emph{regular zero of order $d$ and breadth $\kappa$} where: 
	\begin{itemize}
		\item a regular zero of order $d$ corresponds to a regular zero of order $d$ and breadth~$n$,
		\item a regular zero (in the usual sense) is a regular zero of order~$1$. 
	\end{itemize}
	In this new terminology, the original inflation procedure of~\cite{inflation:2021} attempts to create a regular zero of order $2$ from a regular zero of order $2$ and arbitrary breadth. Here we develop \emph{inflation of arbitrary order}, a routine to create a regular zero of order $d$ from a regular zero of order $d$ and arbitrary breadth.
	This turns out to be much more subtle and intricate than the approach in~\cite{inflation:2021}.
	In addition, for systems where inflation cannot be applied directly, we develop new methods to isolate the cluster and provide upper bounds on the size of the cluster.
	
	The shape of isolating regions is dictated by the type of the singularity the input system is close to. Although these regions may in turn be easily bounded by Euclidean balls, this would be an unnecessary relaxation: the region $\smallR$ that we construct (see, for instance, \cref{fig:contours}) is natural and encapsulates the cluster much closer than the ball in which it may be inscribed.     
	
	The \emph{symbolic} procedure of inflation is carried out for $\cG$ in the view of \emph{numerical} applications. Namely, the transformations that we use are applied to a nearby polynomial system $\cF$ with a cluster of zeros. At the end, the effect of the transformations on the difference between $\cF$ and $\cG$ must be small enough to apply the multivariate version of \emph{Rouch\'e's theorem}~\cite[Theorem 2.12]{Rouche:1993}.  We note that our certification step is similar to the certification in \cite{BeckerSagraloff:2017}, but the goals of the papers are different and the use of inflation to regularize the system is one of the novel contributions of the current paper.
	
	We note that the paradigm in which we operate doesn't distinguish between scenarios where there is only one singular zero and scenarios where several simple or singular zeros are tightly clustered. We aim to produce the isolating regions as described in the introduction. We point out that our procedures to construct a nearby system with a singular zero do not work universally. Producing a nearby singular system in a more general setting is the focus of \cite{Mourrain:2020}, for instance. We also assume that an approximation $z^\ast$ is given to us.  There is more focused work on algorithms to approximate a cluster in case of embedding dimension one ~\cite{GiLeSaYa:EmbedDim1} or to restore convergence of Newton's method around a singular solution via deflation~\cite{LVZ}, for example.
	
	Isolating clusters in cases not covered by our technique and finding new algorithms to approximate clusters are worth future exploration.

	
	\subsection{Outline}
	In \Cref{sec:singular}, we consider a square system with a singular zero and, first, introduce necessary transformations to put the system in pre-inflatable shape with a regular zero of breadth $\kappa$ and order $d$, and then inflate in order to isolate the original singular zero.
	In \Cref{sec:nearby}, we demonstrate that the same procedure applied to a nearby system succeeds in isolating a cluster of roots.   
	In \Cref{sec:irregular}, we consider systems that are hard or impossible to put in inflatable shape and show that after symbolic manipulation, it is still possible to isolate a cluster, and the size of the cluster can be bounded from above.
	\Cref{sec:proofs} is devoted to proofs of our results.
	
	\section{Inflation}\label{sec:singular} 
	The first case we consider is a square system which has a singularity at $z^\ast$.  This case is a main step in our general case in \Cref{sec:nearby}  since there we replace the given system with a nearby singular system.  For simplicity, we assume $z^\ast$ is the origin in many of our computations.  Since the point $z^\ast$ is explicitly given or computed as a rational point, no heavy symbolic techniques are needed to perform this translation.
	
	\subsection{Regular breadth-$\bm{\kappa}$ systems of order $\bm{d}$}
	Consider a \emph{graded local order} $>$ on $\CC[x_1,\dots,x_n]$, i.e., the order $>$ respects multiplication and if the total degrees of two exponent vectors $\alpha$ and $\beta$ satisfy $|\alpha|>|\beta|$, then $x^\alpha<x^\beta$.  For a polynomial, we use the phrase \emph{initial term} to denote the largest nonzero monomial under the order $>$, and we use \emph{initial form} to denote the homogeneous polynomial formed from the terms of the polynomial with smallest total degree.  We define the \emph{breadth} $\kappa$ of the polynomial system to be the nullity of its Jacobian.
	
	For an ideal $I=\langle \cF\rangle\subseteq\CC[x_1,\dots,x_n]$, the \emph{standard monomials} are the monomials that do not appear as initial terms of polynomials in $I$.  For each $i$, we define the (local) \emph{Hilbert function} evaluated at $i$, denoted by $h_{\cF}(i)$, to be the number of monomials of total degree $i$ appearing as standard monomials.  The corresponding (local) \emph{Hilbert series} is defined to be $HS_{\cF}(t)=\sum_{i\geq 0} h_{\cF}(i)t^i$.  We note that $h_{\cF}(1)=\kappa$.
	
	\begin{definition}\label{def:regularzero}
		Suppose that $\cP=\{p_1,\dots,p_n\}$ is a square polynomial system $\CC[x_1,\dots,x_n]$ such that the origin is an isolated zero of $\cP$ of breadth $\kappa$.  We say that the origin is a \emph{regular zero of breadth $\kappa$ and order $d$} if the Hilbert series for $\langle\cP\rangle$ at the origin is $(1+t+\dots+t^{d-1})^\kappa$.
	\end{definition}
	
	We note that when the origin is a regular zero of breadth $\kappa$ and order $d$ of a system $\cP$, the multiplicity of the zero at the origin is $d^\kappa$.
	
	Throughout the remainder of this section, we provide \Cref{algo:generalizedInflationcluster}, which converts any square polynomial system into a standardized form, called the pre-inflatable form.
	
	\begin{definition}\label{def:preinflatable}
		Suppose that $\cP=\{p_1,\dots,p_n\}$ is a square polynomial system in $\CC[x_1,\dots,x_n]$ such that the origin is an isolated zero of $\cP$.  We say that $\cP$ is a \emph{$(\kappa,k,\ell)$-pre-inflatable system} if 
		\begin{enumerate}
			\item $\cP$ has breadth $\kappa$ and the kernel of the Jacobian is $\langle e_1,\dots,e_{\kappa}\rangle$, where $e_i$ denotes the $i$-th standard basis vector,
			\item the only terms in $p_1,\dots,p_\kappa$ involving $x_{\kappa+1},\dots,x_n$ have degree greater than $k$, and 
			\item the only terms in $p_{\kappa+1},\dots,p_n$ involving only $x_1,\dots,x_\kappa$ have degree greater than $\ell$.
		\end{enumerate}
	\end{definition}
	
	In the case where our algorithm is applied to a square system with a regular zero of breadth $\kappa$ and order $d$, we prove in \Cref{sec:proofs} that the resulting system is particularly well-structured.  In particular, when the parameters to the pre-inflatable algorithm are $k=\ell=d$, the resulting system is described as in the following theorem:
	
	\ifthenelse{\mainfilecheck{1} > 0}{\begin{thm}\label{thm:HS-for-redular-kappa-d}
			Let $\cG$ be a square system in $n$ variables with a zero at $z^\ast$.  Suppose that $z^\ast$ is a zero of breadth $\kappa$ and order $d$.  Then there is a locally invertible transformation that realizes $z^\ast$ as a regular zero of breadth $\kappa$ and order $d$ at the origin of a polynomial system $\cP=\{p_1,\dots,p_n\}$ which is $(\kappa,d,d)$-pre-inflatable such that 
			\begin{enumerate}
				\item the initial degree of each $p_i$ is equal to $d$ for $1\leq i\leq \kappa$, 
				\item the initial forms of $p_1,\dots,p_\kappa$ do not vanish on the unit sphere in $x_1,\dots,x_\kappa$, and
				\item the initial form of $p_i$ is $x_i$ for $\kappa+1\leq i\leq n$.
			\end{enumerate}
	\end{thm}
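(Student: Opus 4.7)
The plan is to construct $\cP$ from $\cG$ by a sequence of locally invertible transformations: an affine change of coordinates, invertible recombinations of the generators, and a final truncated analytic coordinate change. Each stage establishes one of the required normal-form properties.

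First, I translate $z^\ast$ to the origin and change coordinates linearly so that $\ker J_\cG(0) = \langle e_1, \dots, e_\kappa \rangle$. An invertible linear recombination of the generators then makes the linear parts of the first $\kappa$ polynomials vanish and those of the remaining $n - \kappa$ polynomials equal to $x_{\kappa+1}, \dots, x_n$. This establishes pre-inflatable condition (1) and the initial-form condition (3) of the theorem for $i > \kappa$.

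Second, I reduce each $p_i$ ($i \le \kappa$) by subtracting scalar multiples of $x^{\alpha - e_l} p_l$ (with $l > \kappa$), proceeding from low to high degree, to eliminate every term of degree $\le d$ involving some $x_l$ with $l > \kappa$; this yields pre-inflatable condition (2) with $k = d$. The Hilbert series hypothesis $(1 + t + \cdots + t^{d-1})^\kappa$ agrees with the polynomial ring $\CC[x_1, \dots, x_\kappa]$ in degrees less than $d$, so no element of $\langle \cG \rangle$ has initial degree strictly between $1$ and $d$ with initial form in $\CC[x_1, \dots, x_\kappa]$. Consequently, after the cross-term reductions the degree $2, \dots, d-1$ parts of $p_i$ (now purely in $x_1, \dots, x_\kappa$) can be eliminated by Gaussian elimination among $p_1, \dots, p_\kappa$, leaving each with initial form $F_i$ of degree exactly $d$ in $\CC[x_1, \dots, x_\kappa]$. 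This gives conditions (1) and (3) of the theorem for $i \le \kappa$.

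Third, for pre-inflatable condition (3) I apply the implicit function theorem to the block $p_{\kappa+1} = \cdots = p_n = 0$: since $(\partial p_j / \partial x_l)_{j, l > \kappa}$ is the identity at the origin, there exist unique formal power series $\phi_{\kappa+1}, \dots, \phi_n \in \CC[[x_1, \dots, x_\kappa]]$ vanishing to order at least $2$ with $p_j(x_1, \dots, x_\kappa, \phi_{\kappa+1}, \dots, \phi_n) = 0$ for $j > \kappa$. Truncating each $\phi_j$ to a polynomial $\phi_j^{(\le d)}$ of degree $\le d$ and performing the polynomial substitution $y_i = x_i$ for $i \le \kappa$, $y_j = x_j - \phi_j^{(\le d)}$ for $j > \kappa$ is locally invertible (identity Jacobian), and $\tilde p_j(y_1, \dots, y_\kappa, 0, \dots, 0) = p_j(y, \phi^{(\le d)}) \equiv p_j(y, \phi) = 0$ modulo terms of degree greater than $d$, delivering pre-inflatable condition (3). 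Because each $\phi_j^{(\le d)}$ vanishes to order at least $2$, this substitution cannot decrease the degree of any monomial, so pre-inflatable condition (2) and the initial forms $F_i$ of $p_1, \dots, p_\kappa$ survive, and the initial form of each $\tilde p_j$ remains $y_j$. Finally, condition (2) of the theorem follows from the Hilbert series identity: $\CC[x_1, \dots, x_\kappa]/(F_1, \dots, F_\kappa)$ has Hilbert series $(1 + t + \cdots + t^{d-1})^\kappa$, hence is Artinian, so $V(F_1, \dots, F_\kappa) = \{0\}$ in $\CC^\kappa$, and by homogeneity the $F_i$ have no common zero on the unit sphere. I expect the main obstacle to be the bookkeeping in the third step: verifying that truncation at precisely degree $d$ delivers pre-inflatable condition (3) exactly while leaving the cross-term structure of $p_1, \dots, p_\kappa$ from the prior reductions undisturbed.
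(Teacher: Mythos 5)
Your construction follows the same route as the paper's \cref{algo:preinflation}: translate, rotate the kernel, row-reduce the linear parts, eliminate low-degree cross terms using $p_{\kappa+1},\dots,p_n$, and absorb the pure $x_1,\dots,x_\kappa$ terms of the last $n-\kappa$ equations by a near-identity substitution. That part, together with the observation that the Hilbert function in degrees $<d$ forces the first $\kappa$ polynomials to have initial degree at least $d$, matches the paper. (Minor point: the degree $2,\dots,d-1$ parts do not need to be ``eliminated by Gaussian elimination''; they are already zero, since a nonzero such part would make some monomial of degree $<d$ in $x_1,\dots,x_\kappa$ non-standard, contradicting the coefficient count.)

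The genuine gap is your final sentence: you assert that $\CC[x_1,\dots,x_\kappa]/(F_1,\dots,F_\kappa)$ has Hilbert series $(1+t+\cdots+t^{d-1})^\kappa$, and derive both conditions (1) (degree exactly $d$) and (2) (non-vanishing on the sphere) from it. This does not follow from the hypothesis. The hypothesis concerns the local Hilbert series of the full ideal $\langle\cP\rangle$, i.e.\ of $\CC[x_1,\dots,x_n]$ modulo the ideal of \emph{all} initial forms of elements of $\langle\cP\rangle$; that ideal contains $\langle F_1,\dots,F_\kappa,x_{\kappa+1},\dots,x_n\rangle$ but could a priori be strictly larger (the generators need not be a standard basis). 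So one only gets $HS_{\CC[x_1,\dots,x_\kappa]/\langle F_1,\dots,F_\kappa\rangle}(t)\geq HS_{\langle\cP\rangle}(t)=(1+t+\cdots+t^{d-1})^\kappa$, and \cref{lem:FrobergLemma1} gives an inequality in the \emph{same} direction, $HS_{\CC[x_1,\dots,x_\kappa]/\langle F_1,\dots,F_\kappa\rangle}(t)\geq (1+t+\cdots+t^{d-1})^\kappa$; the two together yield nothing. Closing this gap is exactly the content of the paper's \cref{lem:regularsequence}: assuming the $F_i$ are not a regular sequence, one takes a syzygy of minimal degree $r$ and shows, by repeatedly rewriting a representation $p=\sum q_ip_i$ to push up the minimal degree of the summands, that no element of $\langle\cP\rangle$ can have an initial form of degree $r$ outside $\langle F_1,\dots,F_\kappa\rangle$, which then contradicts the prescribed Hilbert series. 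A parallel argument via \cref{lem:FrobergLemma1} (comparing with $\frac{(1-t^d)^{\kappa-\ell}}{(1-t)^\kappa}$ when $\ell$ of the $(p_i)_d$ vanish) is needed to rule out initial degree $>d$. Without supplying these arguments, the core of the theorem remains unproved.
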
}
	{\begin{theorem}\label{thm:HS-for-redular-kappa-d}
			Let $\cG$ be a square system in $n$ variables with a zero at $z^\ast$.  Suppose that $z^\ast$ is a zero of breadth $\kappa$ and order $d$.  Then there is a locally invertible transformation that realizes $z^\ast$ as a regular zero of breadth $\kappa$ and order $d$ at the origin of a polynomial system $\cP=\{p_1,\dots,p_n\}$ which is $(\kappa,d,d)$-pre-inflatable such that 
			\begin{enumerate}
				\item the initial degree of each $p_i$ is equal to $d$ for $1\leq i\leq \kappa$, 
				\item the initial forms of $p_1,\dots,p_\kappa$ do not vanish on the unit sphere in $x_1,\dots,x_\kappa$, and
				\item the initial form of $p_i$ is $x_i$ for $\kappa+1\leq i\leq n$.
			\end{enumerate}
	\end{theorem}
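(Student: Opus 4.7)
The plan is to build the desired local transformation in three stages and then derive properties~(1)--(3) from the Hilbert series hypothesis. First I would translate $z^\ast$ to the origin. Next, a linear change of coordinates in $\CC^n$ combined with row operations on $\cG$ can be used to align the kernel of the Jacobian at the origin with $\langle e_1,\dots,e_\kappa\rangle$ and normalize the linear parts of the last $n-\kappa$ polynomials to $x_{\kappa+1},\dots,x_n$, while the first $\kappa$ have no linear part. Finally I would invoke \Cref{algo:generalizedInflationcluster} with $k=\ell=d$ to produce a $(\kappa,d,d)$-pre-inflatable system $\cP=\{p_1,\dots,p_n\}$, locally equivalent to $\cG$ near the origin. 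Property~(3) is then immediate, since the pre-inflation steps do not disturb the already normalized linear parts of $p_i$ for $i>\kappa$.

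For properties~(1) and~(2), I would use the implicit function theorem to eliminate $x_{\kappa+1},\dots,x_n$. Since the block $(\partial p_i/\partial x_j)_{i,j>\kappa}$ equals the identity at the origin, each equation $p_i=0$ for $i>\kappa$ has a unique formal solution $x_i=\phi_i(x_1,\dots,x_\kappa)$ of order at least two; substitution then yields a local-ring isomorphism, so the Hilbert series $(1+t+\cdots+t^{d-1})^\kappa$ transfers to $\CC[[x_1,\dots,x_\kappa]]/\langle\hat p_1,\dots,\hat p_\kappa\rangle$ with $\hat p_j := p_j(x_1,\dots,x_\kappa,\phi_{\kappa+1},\dots,\phi_n)$. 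Writing $p_j=\tilde q_j(x_1,\dots,x_\kappa)+S_j$ with $S_j$ collecting the monomials that involve $x_{\kappa+1},\dots,x_n$, the pre-inflatable bound with $k=d$ forces every monomial of $S_j$ to have degree strictly greater than $d$; since $\phi_i$ has order at least two, substituting into $S_j$ cannot create any term of degree $\le d$. Hence the degree-$\le d$ jet of $\hat p_j$ coincides with that of $\tilde q_j$, and therefore with that of $p_j$ itself.

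To finish I would analyze $\CC[[x_1,\dots,x_\kappa]]/\langle\hat p_1,\dots,\hat p_\kappa\rangle$ degree by degree. In degrees less than $d$ the target Hilbert function is the full ambient dimension, so no $\hat p_j$ may have order smaller than $d$; at degree $d$ a drop of exactly $\kappa$ dimensions is required, and this can only be supplied by the initial forms $\hat q_j$ of those $\hat p_j$ with order exactly $d$, forcing all $\kappa$ orders to equal $d$ and the $\hat q_j$ to be linearly independent. The total length $d^\kappa$ of the quotient then attains the minimum value $\prod_j \mathrm{ord}(\hat p_j)$ possible for a system of parameters in a regular local ring of dimension $\kappa$, which forces $\hat q_1,\dots,\hat q_\kappa$ to be a homogeneous regular sequence of degree-$d$ forms in $\CC[x_1,\dots,x_\kappa]$. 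Combined with the identification of each $\hat q_j$ with the initial form of $p_j$, this yields property~(1), and property~(2) follows because $\kappa$ such forms cut out only the origin in $\CC^\kappa$ and thus do not simultaneously vanish on the unit sphere. I expect the main technical difficulty to lie in this last step: the pre-inflatable bound $k=d$ is precisely what guarantees that the degree-$d$ data of $p_j$ survives the implicit-function substitution intact, so that the rigidity supplied by the Hilbert series on the reduced ring in $\kappa$ variables transfers back cleanly to the initial-form structure of the original $p_j$.
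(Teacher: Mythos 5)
Your proposal takes a genuinely different route from the paper's for the two hard properties, and much of it is sound. The paper stays in all $n$ variables throughout: property (3) follows from the pre-inflation construction (\Cref{lem:preinflatable} and its corollary), property (1) from a count of standard monomials combined with Fr\"oberg's bound (\Cref{lem:FrobergLemma1}), and property (2) from a syzygy-rewriting argument (\Cref{lem:regularsequence}). You instead eliminate $x_{\kappa+1},\dots,x_n$ by the implicit function theorem and transfer the Hilbert series to $\CC[[x_1,\dots,x_\kappa]]/\langle\hat p_1,\dots,\hat p_\kappa\rangle$. Your observation that the $(\kappa,d,d)$-pre-inflatable condition (every term involving $x_{\kappa+1},\dots,x_n$ has degree $>d$) together with $\mathrm{ord}(\phi_i)\geq 2$ preserves the degree-$\leq d$ jets is correct, and it isolates exactly the role those conditions play in the paper. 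Your degree-by-degree argument for property (1) --- full Hilbert function below degree $d$, a drop of exactly $\kappa$ at degree $d$ forcing all $\kappa$ leading forms to be nonzero and linearly independent --- is also correct and is, if anything, more elementary than the paper's appeal to Fr\"oberg. (Minor point: the algorithm you mean to invoke is \Cref{algo:preinflation}, not \Cref{algo:generalizedInflationcluster}.)

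The gap is in your final step, which is where the substance of the theorem lives. You assert that since $\dim_\CC\CC[[x_1,\dots,x_\kappa]]/\langle\hat p_1,\dots,\hat p_\kappa\rangle = d^\kappa = \prod_j\mathrm{ord}(\hat p_j)$ attains the minimal possible value for a system of parameters in a regular local ring, the leading forms must be a regular sequence. Two distinct claims are hidden here: the local B\'ezout inequality $\mathrm{length}\geq\prod_j\mathrm{ord}(\hat p_j)$, and its equality case (equality iff the leading forms generate the leading ideal iff they form a regular sequence). The inequality is classical, but the equality characterization is precisely the nontrivial content of \Cref{lem:regularsequence}: one must rule out the scenario in which the leading forms $(\hat p_j)_d$ admit a syzygy in some degree $r$ and the leading ideal of $\langle\hat p_1,\dots,\hat p_\kappa\rangle$ compensates with new generators in degree $r$ so that the colength is still $d^\kappa$. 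The paper excludes this by the rewriting argument (choosing the representation $p=\sum q_ip_i$ to maximize the minimal initial degree and then substituting the syzygy), and note that the paper's contradiction is obtained against the coefficient of $t^r$ of the hypothesized Hilbert series, not against the total length; by collapsing the hypothesis to the single invariant $d^\kappa$ you need a strictly stronger commutative-algebra input. As written, the sentence ``which forces $\hat q_1,\dots,\hat q_\kappa$ to be a homogeneous regular sequence'' is unproved; you must either supply an argument of this kind or cite a precise reference for the equality case of the local B\'ezout bound.
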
}

	We observe that when the second condition holds, the initial forms of $p_1,\dots,p_\kappa$ form a regular sequence.  Systems of the form described in \Cref{thm:HS-for-redular-kappa-d} are ideal for applying inflation.
	
	The \emph{inflation operator of order $d$ and breadth $\kappa$} is defined to be
	$$
	S_\kappa^d(x_i)=\begin{cases}
		x_i&1\leq i\leq\kappa\\
		x_i^d&\kappa+1\leq i\leq n
	\end{cases}.
	$$
	The inflation operator in \cite{inflation:2021} is of order $2$ and breadth $\kappa$.
	
	\subsection{Constructing regular zeros}\label{sec:constructingRegularZeros}
	
	Suppose that a given system $\cG$ has a singular zero at the origin of breadth $\kappa$.  We present a sequence of transformations to construct an equivalent system that is $(\kappa,k,\ell)$-pre-inflatable for any given $k,\ell\in\NN$, see \cref{algo:preinflation}. 
	
	First, since $\cG$ has breadth $\kappa$, there is a linear transformation $A:\CC^n\rightarrow\CC^n$ so that the kernel of the Jacobian of $\cA:=\cG\circ A$ is spanned by $e_1,\dots,e_\kappa$, where $e_i$ denotes the $i$-th standard basis vector.  This implies that the linear parts of the polynomials in $\cA$ only involve $x_{\kappa+1},\dots,x_n$.
	
	Second, there is a linear map $B:\CC[x_1,\dots,x_n]^n\rightarrow\CC[x_1,\dots,x_n]^n$ such that $\cB:=B\circ \cA=\{b_1,\dots, b_n\}$ is a square system of polynomials where $b_1,\dots,b_\kappa$ do not have any linear terms while the linear part of $b_i$ is $x_i$ for $i>\kappa$.  The map $B$ can be chosen to implement row reduction on the linear parts of the polynomials of $\cA$.
	
	Next, for the given $k$, there is an invertible \ifthenelse{\mainfilecheck{1} > 0}{}{polynomial} linear map $C_k:\CC[x_1,\dots,x_n]^n\rightarrow\CC[x_1,\dots,x_n]^n$ such that $\cC_k:=C_k\circ \cB=\{c_1,\dots,c_n\}$ is a square system of polynomials with the same properties as $\cB$, and, in addition, in $c_1,\dots,c_\kappa$, the smallest total degree of a term involving $x_{\kappa+1},\dots,x_n$ is greater than $k$.  This transformation can be achieved by using the initial terms of $b_{\kappa+1},\dots,b_n$ to eliminate monomials involving $x_{\kappa+1},\dots,x_n$ of small degree.
	
	Finally, for the given $\ell$, there is an invertible change of variables, denoted by $D_\ell$, such that $\cP_{k,\ell}:=\cC_k\circ D_\ell=\{p_1,\dots,p_n\}$ is a square system of polynomials with the same properties as $\cC_k$ and the smallest degree of a term in $p_{\kappa+1},\dots,p_n$ involving only $x_1,\dots,x_\kappa$ is greater than $\ell$.  This change of variables can be achieved by a sequence of transformations of the form $x_i\mapsto x_i+q_i(x_1,\dots,x_\kappa)$ for some polynomial $q_i$ and the remaining variables are left unchanged.
	\algrenewcommand\algorithmicrequire{\textbf{Input}:}
	\algrenewcommand\algorithmicensure{\textbf{Output}:}
	\begin{algorithm}[ht]
		\caption{Pre-inflation construction}
		\label{algo:preinflation}
		\begin{algorithmic}[1]
			\Require A square polynomial system $\cG$ with a singular zero $z^\ast$ of breadth $\kappa$, and integers $d$ and $\ell$.
			\Ensure A $(\kappa,k,\ell)$-pre-inflatable system whose zero at the origin is of the same multiplicity as $z^\ast$ for $\cG$.
			\State {Apply an affine transformation $A:\CC^n\rightarrow\CC^n$ so that $A(0) = z^\ast$ and the kernel of the Jacobian of $\cA=\cG\circ A$ is spanned by the standard basis vectors $e_1,\dots, e_\kappa$.}\label{step:A} 
			\State {Apply a linear map $B:\CC[x_1,\dots,x_n]^n\rightarrow\CC[x_1,\dots,x_n]^n$ to construct the system $\cB=B\circ\mathcal{A}=\{b_1,\dots,b_n\}$ such that $b_i$ for $i=1,\dots, \kappa$ do not have any linear terms and the linear form of $b_i$ is $x_i$ for $i>\kappa$.}\label{step:B}
			\State {Apply a linear map $C_k:\mathbb{C}[x_1,\dots,x_n]^n\rightarrow\mathbb{C}[x_1,\dots, x_n]^n$ to produce the system $\cC_k=C_k\circ \cB=\{c_1,\dots, c_n\}$ such that the smallest total degree of a term with $x_{\kappa+1},\dots, x_n$ in $c_1,\dots, c_\kappa$ is greater than $k$.}\label{step:C}
			\State {Apply a change of variables $D_\ell$ producing the system $\mathcal{P}_{k,\ell}=\mathcal{C}_k\circ D_\ell=\{p_1,\dots,p_n\}$ such that the smallest total degree of a term in $p_{\kappa+1},\dots, p_n$ with only $x_1,\dots, x_\kappa$ is greater than $\ell$.} \label{step:T}
		\end{algorithmic}
	\end{algorithm}
	
	The property of interest in this series of transformations is the consequence of the \Cref{lem:preinflatable}, which proves that the resulting system is $(\kappa,k,\ell)$-pre-inflatable.  The following example explicitly illustrates this construction:

	\begin{example}\cite[Example 4.1]{OjikaExample}\label{ex:preinflatable}
		Consider the polynomial system
		$$
		\cG=\begin{Bmatrix}
			2x_1+x_2+x_1^2\\
			8x_1+4x_2+x_2^2
		\end{Bmatrix}.
		$$
		This system has a zero at the origin and its Jacobian is $\begin{pmatrix}2&1\\8&4\end{pmatrix}$, which has a one-dimensional nontrivial kernel spanned by $\langle 1,-2\rangle$.  Therefore, this system is breadth-one.  We construct a $(1,3,3)$-pre-inflatable system from $\cG$.
		
		For the linear transform $A$, we use the matrix $\frac{1}{\sqrt{5}}\begin{pmatrix}1&2\\-2&1\end{pmatrix}$, which is the unitary matrix which maps the first standard basis vector to a nonzero element of the kernel of the Jacobian and the second standard basis vector to a vector perpendicular to the kernel.  The resulting system is
		$$
		\cA=\cG\circ A=\begin{Bmatrix}
			\sqrt{5}x_2+\frac{x_1^2}{5}+\frac{4x_1x_2}{5}+\frac{4x_2^2}{5}\\
			4\sqrt{5}x_2+\frac{4x_1^2}{5}-\frac{4x_1x_2}{5}+\frac{x_2^2}{5}
		\end{Bmatrix}.
		$$
		
		Next, row reduction on the linear part of this system, expressed via the matrix $\begin{pmatrix}
			1&-\frac{1}{4}\\0&\frac{1}{4\sqrt{5}},
		\end{pmatrix}$ results in the system
		$$
		\cB=B\circ\cA=\begin{Bmatrix}
			x_1x_2+\frac{3x_2^2}{4}\\
			x_2+\frac{x_1^2}{5\sqrt{5}}-\frac{x_1x_2}{5\sqrt{5}}+\frac{x_2^2}{20\sqrt{5}}
		\end{Bmatrix}.
		$$
		
		Next, the transformation for $k=3$ is the symbolic transformation that uses the initial $x_2$ of the second polynomial to eliminate monomials involving $x_2$ in the first equation.  The transformation is given by the matrix 
		\ifthenelse{\mainfilecheck{1} > 0}{$$C_3=\begin{pmatrix}-5\sqrt{5}&5\sqrt{5}x_1+\frac{15\sqrt{5}x_2}{4}+\frac{x_1^2}{4}+\frac{x_1x_2}{2}-\frac{3x_2^2}{16}\\0&1\end{pmatrix},$$}{$C_3=\begin{pmatrix}-5\sqrt{5}&5\sqrt{5}x_1+\frac{15\sqrt{5}x_2}{4}+\frac{x_1^2}{4}+\frac{x_1x_2}{2}-\frac{3x_2^2}{16}\\0&1\end{pmatrix}$,} which arrives at the system
		$$
		\cC_3=C_3\circ \cB=\begin{Bmatrix}
			x_1^3+\frac{x_1^4}{20\sqrt{5}}+\frac{x_1^3x_2}{20\sqrt{5}}-\frac{x_1^2x_2^2}{8\sqrt{5}}+\frac{x_1x_2^3}{16\sqrt{5}}-\frac{3x_2^4}{320\sqrt{5}}\\[.15cm]
			x_2+\frac{x_1^2}{5\sqrt{5}}-\frac{x_1x_2}{5\sqrt{5}}+\frac{x_2^2}{20\sqrt{5}}
		\end{Bmatrix}.
		$$
		
		Finally, the change of variables for $\ell=3$ absorbs the unwanted terms involving $x_1$ into $x_2$ via the transformation where $x_2$ is replaced by $x_2-\frac{x_1^2}{5\sqrt{5}}-\frac{x_1^3}{125}$.  This results in rather long polynomials, but many of the coefficients are quite small in absolute value.  The resulting system starts with
		$$
		\cP_{3,3}=\cH\circ D_3=\begin{Bmatrix}
			x_1^3+\frac{x_1^4}{20\sqrt{5}}+\frac{x_1^3x_2}{20\sqrt{5}}-\frac{x_1^2x_2^2}{8\sqrt{5}}+\frac{x_1x_2^3}{16\sqrt{5}}-\frac{3x_2^4}{320\sqrt{5}}+\dots\\[.15cm]
			x_2-\frac{x_1x_2}{5\sqrt{5}}+\frac{x_2^2}{20\sqrt{5}}-\frac{x_1^2x_2}{250}+\frac{x_1^4}{500\sqrt{5}}-\frac{x_1^3x_2}{1250\sqrt{5}}+\dots
		\end{Bmatrix}
		$$
	\end{example}
	
	Since the transformations $B$ and $C_k$ are both invertible for any $k$, we observe that $\cA$, $\cB$, and $\cC_k$ all generate the same ideal.  The construction of $\cP_{k,\ell}$ from $\cG$, as in \Cref{ex:preinflatable}, forms a preprocessing step so that the resulting system is pre-inflatable.
	
	Unfortunately, not all pre-inflatable systems can be successfully inflated so that their zeros can be isolated using our techniques.  Only those systems where the $(\kappa,d,d)$-pre-inflatable system has a regular zero of breadth $\kappa$ and order $d$ can be inflated with our techniques.

	
	
	\subsection{Applying inflation}
	
	Suppose that $\cP$ is a $(\kappa,d,d)$-pre-inflatable system where the origin is a regular zero of breadth $\kappa$ and order $d$.  The system $\cP\circ S_\kappa^d$ is then a square system where the initial forms are all of degree $d$ and form a regular sequence, see Section \ref{sec:mainproof}.  Therefore, $\cP\circ S_\kappa^d$ has a zero of multiplicity $d^n$ at the origin.
	
	Let $(\cP\circ S_\kappa^d)_d$ denote the square homogeneous system of degree $d$ consisting of the initial forms of $\cP\circ S_\kappa^d$.  Since this system does not vanish on the unit sphere, let $M$ be a positive lower bound on $\|(\cP\circ S_\kappa^d)_d\|$ over the (Hermitian) unit sphere.
	
	Since all of the terms of $\cP\circ S_\kappa^d-(\cP\circ S_\kappa^d)_d$ are of degree greater than $d$, there is a constant $C>0$ such that for all $\|x\|\leq 1$, $\|\cP\circ S_\kappa^d(x)-(\cP\circ S_\kappa^d)_d(x)\|\leq C\|x\|^{d+1}.$  Then, by applying Rouch\'e's theorem as in \Cref{lem:RoucheVersion} both $(\cP\circ S_\kappa^d)_d$ and $\cP\circ S_\kappa^d$ have $d^n$ zeros in the ball of radius $\varepsilon$.  
	
	While it is straight-forward to observe that the origin is a zero of multiplicity $d^n$ for $\cP\circ S_\kappa^d$, the content of this computation is that there are no additional zeros in the ball of radius $\varepsilon$. The process established is summarized in Algorithm \ref{algo:generalizedInflation}.

	
	\begin{example}\label{ex:preinflatable:continued}
		Continuing \Cref{ex:preinflatable}, the polynomial system $\cP_{3,3}$ is $(1,3,3)$-pre-inflatable, and the origin is a regular zero of breadth $1$ and order $3$.  The inflation step replaces $x_2$ with $x_2^3$.  The resulting system is 
		$$
		\cP_{3,3}\circ S_1^3=\begin{Bmatrix}
			x_1^3+\frac{x_1^4}{20\sqrt{5}}-\frac{x_1^5}{500}-\frac{7x_1^6}{5000\sqrt{5}}+\frac{x_1^3x_2^3}{20\sqrt{5}}-\frac{x_1^7}{10000}+\cdots\\
			x_2^3+\frac{x_1^4}{500\sqrt{5}}-\frac{x_1x_2^3}{5\sqrt{5}}+\frac{x_1^5}{31250}-\frac{x_1^2x_2^3}{250}+\frac{x_1^6}{312500\sqrt{5}}+\cdots
		\end{Bmatrix}.
		$$
		In this example, $(\cP_{3,3}\circ S_1^3)_3=\{x_1^3,x_2^3\}$, and $\|(\cP_{3,3}\circ S_1^3)_3(x)\|\geq \frac{1}{2}$ for $x$ of norm $1$.  We observe that the sum of the absolute values of the noninitial coefficients of $\cP_{3,3}\circ S_\kappa^3$ is $\frac{39599350003}{78125000000\sqrt{5}}+\frac{98831503}{3906250000}\approx 0.251981$.  Since this is less than $\frac{1}{2}$, for any fixed $0<\varepsilon\leq1$ and $\|x\|=\varepsilon$,
		\begin{equation}\label{eq:rouche-condition}
			\|(\cP_{3,3}\circ S_1^3)_3(x)\|>\|\cP_{3,3}\circ S_1^3(x)-(\cP_{3,3}\circ S_1^3)_3(x)\| .
		\end{equation}
		Hence, Rouch\'e's theorem applies and $\cP_{3,3}\circ S_1^3$ has $3^2$ zeros in the ball of radius $\varepsilon$. 
		
		Since the ball containing the $9$ zeros is defined by $|x_1|^2+|x_2|^2\leq\varepsilon$, we can apply the inverse of the changes of variables $A$, $D_3$, $S_1^3$ to compute the following region in the original coordinates, 
		$$
		\frac{1}{5}\left|x_1-2x_2\right|^2+
		\frac{1}{5^{1/3}}\left|\left(2x_1+x_2\right)+\frac{\left(x_1-2x_2\right)^2}{25}+\frac{\left(x_1-2x_2\right)^3}{625}\right|^{\frac{2}{3}}\leq\varepsilon^2,
		$$
		containing the triple zero of $\cF$.  We observe that the inflation map creates a three-to-one cover of the zeros of $\cP_{3,3}\circ S_1^3$ to those of $\cP_{3,3}$, which confirms the root count of $\cF$.
	\end{example}
	
	\ifthenelse{\mainfilecheck{1} > 0}{\begin{thm}\label{thm:main}
			Suppose that $\cG$ is a square polynomial system where $z^\ast$ is a regular zero of breadth $\kappa$ and order $d$ of $\cG$. \Cref{algo:generalizedInflation} produces a region containing $z^\ast$ and no other zeros of $\cG$.  Moreover, the multiplicity of the zero at $z^\ast$ is $d^\kappa$.
	\end{thm}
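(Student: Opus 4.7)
The plan is to reduce to the structured setting provided by \Cref{thm:HS-for-redular-kappa-d} and then invoke Rouché's theorem on the inflated system. First I would use \cref{algo:preinflation} with $k=\ell=d$ to replace $\cG$ near $z^\ast$ by a locally equivalent $(\kappa,d,d)$-pre-inflatable system $\cP=\{p_1,\dots,p_n\}$ realizing $z^\ast$ at the origin. Since every transformation used is locally invertible, both the multiplicity at $z^\ast$ and the isolation of $z^\ast$ from the other zeros of $\cG$ are preserved, and it suffices to isolate the origin as a zero of $\cP$. By \Cref{thm:HS-for-redular-kappa-d}, the initial form of each $p_i$ for $i\le\kappa$ has degree $d$, involves only $x_1,\dots,x_\kappa$, and is nonvanishing on the unit sphere in those variables, while $p_i$ has initial form $x_i$ for $i>\kappa$.

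Next I would analyze the inflated system $\cP\circ S_\kappa^d$. Substituting $x_i\mapsto x_i^d$ for $i>\kappa$ leaves the first $\kappa$ initial forms unchanged and converts the initial form of each remaining $p_i$ into $x_i^d$, so the degree-$d$ initial part $(\cP\circ S_\kappa^d)_d$ is a homogeneous square system of degree $d$ whose only common zero in $\CC^n$ is the origin. Its components therefore form a regular sequence and, by Bézout applied in the local ring, yield a zero of multiplicity $d^n$ at the origin. Compactness of the unit sphere furnishes a constant $M>0$ with $\|(\cP\circ S_\kappa^d)_d(x)\|\ge M\|x\|^d$ everywhere, and the higher-order terms of $\cP\circ S_\kappa^d-(\cP\circ S_\kappa^d)_d$ are bounded by $C\|x\|^{d+1}$ on the unit ball for some $C>0$.

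I would then apply Rouché's theorem in the form of \Cref{lem:RoucheVersion}. For any $0<\varepsilon<M/C$ the Rouché inequality holds on the sphere of radius $\varepsilon$, so $\cP\circ S_\kappa^d$ has exactly $d^n$ zeros (with multiplicity) in the closed ball of radius $\varepsilon$. Because the origin already accounts for this entire count, $\cP\circ S_\kappa^d$ has no other zeros in the ball.

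Finally, I would transport this isolation statement back to $\cP$ and then to $\cG$ by pulling through $S_\kappa^d$ and through the inverses of the transformations of \cref{algo:preinflation}; this pull-back is the region that \Cref{algo:generalizedInflation} emits. Away from the coordinate hyperplanes $x_{\kappa+1}\cdots x_n=0$, the inflation $S_\kappa^d$ is a $d^{n-\kappa}$-to-one cover, so any zero of $\cP$ other than the origin inside the image region would lift to a nonzero zero of $\cP\circ S_\kappa^d$ in the $\varepsilon$-ball, contradicting the Rouché count. The multiplicity claim $d^\kappa$ is immediate from the definition of a regular zero of breadth $\kappa$ and order $d$, since $(1+t+\cdots+t^{d-1})^\kappa$ evaluated at $t=1$ equals $d^\kappa$. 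The main obstacle I anticipate is carefully handling preimages of zeros of $\cP$ whose last $n-\kappa$ coordinates vanish, where $S_\kappa^d$ is not a genuine $d^{n-\kappa}$-to-one cover; this will be resolved by observing that the pre-inflatable normal form of \Cref{thm:HS-for-redular-kappa-d} forces any such zero near the origin to coincide with the origin itself.
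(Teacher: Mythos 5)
Your proposal is correct and follows essentially the same route as the paper: pre-inflation via \cref{algo:preinflation} and \Cref{thm:HS-for-redular-kappa-d}, inflation to make the degree-$d$ initial part a regular sequence vanishing only at the origin, \Cref{lem:RoucheVersion} to exclude other zeros from the $\varepsilon$-ball, and the Hilbert-series evaluation at $t=1$ for the multiplicity $d^\kappa$. The only stylistic difference is your closing covering argument, which can be simplified: any zero of $\cP$ in the image of the $\varepsilon$-ball under $S_\kappa^d$ has \emph{some} preimage in the ball, which is then a zero of $\cP\circ S_\kappa^d$ and hence the origin, so the degree of the cover and the behavior on the coordinate hyperplanes never need to be examined.
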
}{\begin{theorem}\label{thm:main}
			Suppose that $\cG$ is a square polynomial system where $z^\ast$ is a regular zero of breadth $\kappa$ and order $d$ of $\cG$. \Cref{algo:generalizedInflation} produces a region containing $z^\ast$ and no other zeros of $\cG$.  Moreover, the multiplicity of the zero at $z^\ast$ is $d^\kappa$.
	\end{theorem}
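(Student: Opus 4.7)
The plan is to reduce to the pre-inflatable normal form of \Cref{thm:HS-for-redular-kappa-d}, apply the inflation operator $S_\kappa^d$, use Rouch\'e's theorem to count roots of the inflated system in a small ball, and finally push the count back through $S_\kappa^d$ and through the pre-inflation transformations. First I would invoke \Cref{thm:HS-for-redular-kappa-d} with $k=\ell=d$ to replace $\cG$ by a locally equivalent $(\kappa,d,d)$-pre-inflatable system $\cP=\{p_1,\dots,p_n\}$ whose zero at the origin has breadth $\kappa$ and order $d$; the initial forms of $p_1,\dots,p_\kappa$ are degree-$d$ forms in $x_1,\dots,x_\kappa$ that do not vanish on the $\kappa$-dimensional unit sphere, and the initial form of $p_i$ is $x_i$ for $i>\kappa$. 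Because the normal-form transformations are locally biholomorphic at $z^\ast$ and preserve multiplicities, it suffices to isolate the zero of $\cP$ at the origin.

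Next I would compute the initial-form system of $\cP\circ S_\kappa^d$. Clause (2) of \Cref{def:preinflatable} guarantees that every term in $p_1,\dots,p_\kappa$ involving $x_{\kappa+1},\dots,x_n$ has total degree $>d$, so the substitution $x_i\mapsto x_i^d$ for $i>\kappa$ only raises these degrees and the initial form of $p_i\circ S_\kappa^d$ agrees with that of $p_i$ for $i\leq\kappa$. Clause (3) guarantees that every term in $p_i$ with $i>\kappa$ that uses only $x_1,\dots,x_\kappa$ has degree $>d$, while the linear term $x_i$ becomes $x_i^d$; hence the new initial form is $x_i^d$. The resulting $n$ degree-$d$ forms have common zero locus $\{x_1=\cdots=x_\kappa=0\}\cap\{x_{\kappa+1}=\cdots=x_n=0\}=\{0\}$ in $\CC^n$, so they form a regular sequence and admit a positive lower bound $M$ on $\|(\cP\circ S_\kappa^d)_d\|$ over the unit sphere, giving $\|(\cP\circ S_\kappa^d)_d(x)\|\geq M\|x\|^d$ by homogeneity.

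Then I would apply Rouch\'e's theorem via \Cref{lem:RoucheVersion}: every term of $\cP\circ S_\kappa^d-(\cP\circ S_\kappa^d)_d$ has degree at least $d+1$, giving a bound $C\|x\|^{d+1}$ for $\|x\|\leq 1$, so on the sphere $\|x\|=\varepsilon$ with $\varepsilon<M/C$ the initial part dominates the tail and both systems have $d^n$ zeros counted with multiplicity inside the ball. To descend to $\cP$, I would use that $S_\kappa^d$ is a finite map of degree $d^{n-\kappa}$, and that the ball $\|x\|\leq\varepsilon$ is closed under the coordinate root-of-unity symmetries in $x_{\kappa+1},\dots,x_n$, so every fiber of $S_\kappa^d$ meeting the ball lies entirely in it; pushforward of multiplicity under a degree-$d^{n-\kappa}$ map then yields exactly $d^\kappa$ zeros with multiplicity of $\cP$ inside $S_\kappa^d(\{\|x\|\leq\varepsilon\})$. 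Undoing the pre-inflation transformations produces the claimed isolating region around $z^\ast$ containing a single cluster of total multiplicity $d^\kappa$ and no other zeros of $\cG$.

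The main obstacle is the structural claim in the second paragraph that the initial-form system after inflation is precisely the asserted regular sequence: this rests essentially on both monomial-restriction clauses of \Cref{def:preinflatable} with $k=\ell=d$, together with the explicit description in \Cref{thm:HS-for-redular-kappa-d}, and verifying that the resulting $n$ forms have only the origin as a common zero in $\CC^n$ uses the nonvanishing statement of clause (2) of \Cref{thm:HS-for-redular-kappa-d}. Once that structural fact is in hand, the Rouch\'e estimate and the covering-degree bookkeeping are standard.
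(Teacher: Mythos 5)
Your proposal follows essentially the same route as the paper: reduce to the $(\kappa,d,d)$-pre-inflatable normal form of \cref{thm:HS-for-redular-kappa-d}, observe that after applying $S_\kappa^d$ the initial forms are $(p_1)_d,\dots,(p_\kappa)_d,x_{\kappa+1}^d,\dots,x_n^d$ with only the origin as common zero, invoke \cref{lem:RoucheVersion} to get exactly $d^n$ zeros in a small ball, and descend through the $d^{n-\kappa}$-to-one inflation cover and the remaining invertible transformations. The only cosmetic difference is that you obtain the multiplicity $d^\kappa$ from the covering degree, whereas the paper also reads it off directly from the Hilbert series $(1+t+\dots+t^{d-1})^\kappa$; both are valid and the argument is correct.
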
}
	
	\algrenewcommand\algorithmicrequire{\textbf{Input}:}
	\algrenewcommand\algorithmicensure{\textbf{Output}:}
	\begin{algorithm}[ht]
		\caption{Generalized inflation for isolating singular zeros}
		\label{algo:generalizedInflation}
		\begin{algorithmic}[1]
			\Require A square polynomial system $\cG$ with a regular zero $z^\ast$ of breadth $\kappa$ and order $d$.
			\Ensure A region $\largeR$ containing the zero $z^\ast$ and no other zeros of~$\cG$.
			\State {Apply \cref{algo:preinflation} to $\cG$ to construct a $(\kappa,d,d)$-preinflatable system $\cP_{d,d}$.} \label{step:preinflation}
			\State {Using the inflation operator $S_\kappa^d$, construct the system $\mathcal{P}_{d,d}\circ S_\kappa^d$, and find $\varepsilon$ so that $\mathcal{P}_{d,d}\circ S_\kappa^d$ has $d^n$ zeros in the ball of radius $\varepsilon$.}
			\State {Apply the inverse of $A\circ D_\ell \circ S_\kappa^d$ to the ball of radius $\varepsilon$ to get the isolating region $\largeR$.}
		\end{algorithmic}
	\end{algorithm}
	
	We note that that when considering one (exact) singular zero, we produce only the large region $\largeR$ since the small region $\smallR$ can be taken to be trivial, i.e., $\smallR = \{z^\ast\}$.
	
	
	
	
	

	\section{Clusters of Zeros}\label{sec:nearby} 
	In the intended application of our approach, we do not expect to be given a system that has a multiple zero, as explored in \Cref{sec:singular}.  Instead, we expect to be given a system that has a cluster of zeros, each with multiplicity one.  Suppose that $\cF$ is a square system of polynomials and $z^\ast$ approximates the center of a cluster of zeros of $\cF$.  Our approach is to find a nearby singular system and use that system to inform about the zeros of $\cF$.
	
	\subsection{Isolating clusters}\label{sec:isolating}
	
	Suppose that system $\cG$ has a (singular) zero at $z^\ast$ whose coefficients are close to those in $\cF$.  Suppose also there exist invertible maps $T:\CC^n\rightarrow\CC^n$ and $U:\CC[x_1,\dots,x_n]^n\rightarrow\CC[x_1,\dots,x_n]^n$ such that the origin is a regular zero of breadth $\kappa$ and order $d$ of $U\circ \cG\circ T$.  One candidate for $U$ and $T$ is presented in \Cref{sec:constructingRegularZeros}.  We then apply these maps and inflation to the original system to get the system $U\circ\cF\circ T\circ S_\kappa^d$.  Let $(U\circ\cF\circ T\circ S_\kappa^d)_d$ denote the homogeneous part of this system of degree $d$.  Similarly, we write $(U\circ\cF\circ T\circ S_\kappa^d)_{>d}$ and $(U\circ\cF\circ T\circ S_\kappa^d)_{<d}$ for the terms greater than or less than $d$.
	
	Let $M$ be a positive lower bound on $\|(U\circ\cF\circ T\circ S_\kappa^d)_d\|$ over the (Hermitian) unit sphere.  Since all the terms of $(U\circ\cF\circ T\circ S_\kappa^d)_{>d}$ are of degree greater than $d$, there is a constant $M_1>0$ such that for all $\|x\|\leq 1$, $\|(U\circ\cF\circ T\circ S_\kappa^d)_{>d}\|\leq M_1\|x\|^{d+1}$.  Similarly, since all terms of $(U\circ\cF\circ T\circ S_\kappa^d)_{<d}$ have degree less than $d$, there is a constant $M_2>0$ such that for all $\|x\|\leq 1$, $\|(U\circ\cF\circ T\circ S_\kappa^d)_{<d}\|<M_2$.  If $\left(\frac{2M_2}{M}\right)^{1/d}<\frac{M}{2M_1}$, then for any $\varepsilon$ between $\smallEpsilon=\left(\frac{2M_2}{M}\right)^{1/d}$ and $\largeEpsilon=\frac{M}{2M_1}$, $\|(U\circ\cF\circ T\circ S_\kappa^d)_d(x)\|$ dominates the other parts of $U\circ\cF\circ T\circ S_\kappa^d$ and, by Rouch\'e's theorem, see Lemma \ref{lem:RoucheVersion}, $U\circ\cF\circ T\circ S_\kappa^d$ and $(U\circ\cF\circ T\circ S_\kappa^d)_d$ 
	have the same number of zeros in the ball of radius $\varepsilon$.  The smaller region $\smallR$ corresponds to the lower bound $\smallEpsilon$ and the larger region $\largeR$ corresponds to the upper bound $\largeEpsilon$ on $\varepsilon$.


	\begin{example}
		Consider the polynomial system
		$$
		\cF=\begin{Bmatrix}
			2x_1+x_2+x_1^2+0.001\\
			8x_1+4x_2+x_2^2+0.001
		\end{Bmatrix}
		$$
		with approximate zero $z^\ast=(-0.0001,-0.0001)$.  This system is a perturbation of our running example from \cref{ex:preinflatable}. The three zeros in the cluster are approximately $(-0.043 - 0.082 i, 0.091 + 0.158 i)$, 
		$(-0.043 + 0.082 i, 0.091 + 0.158 i)$, and $(0.086, -0.181)$, and $z^\ast$ approximates their average.
		
		First, we shift the system $\cF$ so that $z^\ast$ is at the origin.  The resulting system is very close to system $\cG$ from \Cref{ex:preinflatable}.  After applying the same transformations from Examples \ref{ex:preinflatable} and \ref{ex:preinflatable:continued}, the resulting system is (after rounding) is
		$$
		\begin{Bmatrix}
			-0.0084+0.0013x_1+0.000078x_1^2+x_1^3+0.0016x_2^3+\dots\\
			-0.000022+0.00002x_1+0.00000089x_1^2+0.00000008x_1^3+x_2^3+\dots
		\end{Bmatrix}.
		$$
		In this case, the cubic part of the system is bounded from below on the unit circle by $0.4984$.  On the other hand, the sum of the coefficients of degree less than $3$ is greater than $0.009757$, which can be used for $M_2$.  Finally, the sum of the coefficients of degree greater than $3$ is less than $0.2746$, which can be used for $M_1$.  Therefore, we may choose $\largeEpsilon=0.9075$ and $\smallEpsilon=0.3396$, as illustrated in \cref{fig:contours} in the original domain.
		
		The only other zero of the system is approximately equal to $(4,8)$ and is far away from all depicted isolating regions. We also note that the regions are not convex and the boundaries of the regions are only piecewise smooth.
		
		\begin{figure}[ht]
			\includegraphics[width=0.5\linewidth]{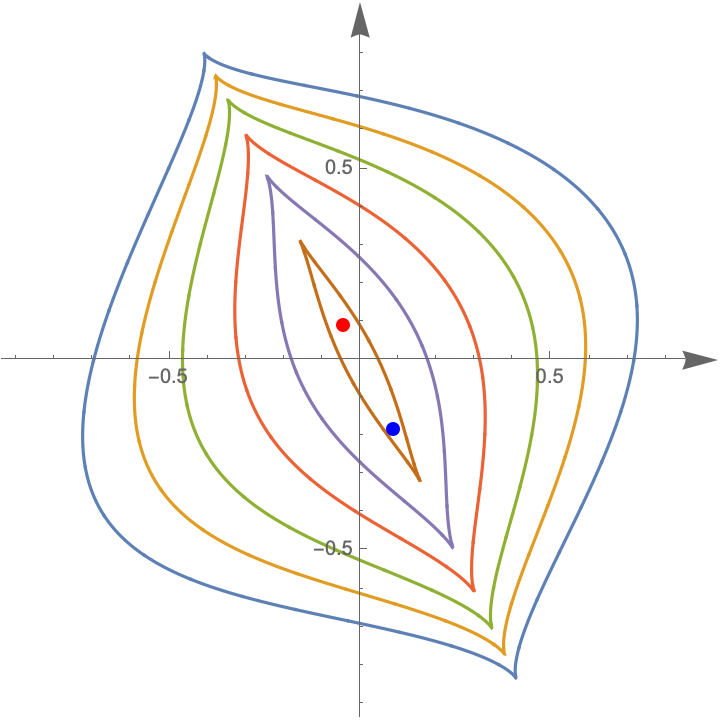}
			\caption{Contours of isolating regions for a cluster of zeros of \begin{math}\cF\end{math}. The inner- and outer-most contours are the boundaries of $\smallR$ and $\largeR$, the smallest and largest isolating regions our method produces.  The {\color{red}(red)} point in the second quadrant depicts the real part of two conjugate nonreal zeros.} 
			\label{fig:contours}
		\end{figure}
		
	\end{example}
	
	\begin{algorithm}[ht]
		\caption{Generalized inflation for isolating clusters of zeros}
		\label{algo:generalizedInflationcluster}
		\begin{algorithmic}[1]
			\Require A square polynomial system $\cG$ with a cluster of zeros near $z^\ast$ and $d\in\NN$.
			\Ensure A pair of regions $\largeR$ and $\smallR$ containing the cluster and no other zeros of $\cG$ such that $\smallR\subseteq\largeR^\circ$.
			\State {Construct a singular system $\cG$ close to the given system.}
			\State {Apply \cref{algo:preinflation} with parameters $k=\ell=d$ to $\cG$ and collect the two invertible maps $U$ and $T$ applied to $\cG$ as $U\circ\cG\circ T$.}
			\State {Compute $U\circ \cF\circ T\circ S_\kappa^d$.}
			\State {Compute a lower bound $M$ on $(U\circ \cF\circ T\circ S_\kappa^d)_d$ on the (Hermitian) unit sphere.}
			\State {Compute an upper bound $M_1$ on $(U\circ \cF\circ T\circ S_\kappa^d)_{>d}/\|x\|^{d+1}$ on the unit disk.}
			\State {Compute an upper bound $M_2$ on $(U\circ \cF\circ T\circ S_\kappa^d)_{<d}$ on the unit disk.}
			\State {Compute $\smallEpsilon=\left(\frac{2M_2}{M}\right)^{1/d}$ and $\largeEpsilon=\frac{M}{2M_1}$.}
			\If {$\smallEpsilon<\largeEpsilon$}
			\State{Apply the inverse of $T\circ S_\kappa^d$ to the balls of radii $\smallEpsilon$ and $\largeEpsilon$ to get the isolating regions $\smallR$ and $\largeR$.}
			\EndIf
		\end{algorithmic}
	\end{algorithm}
	
	\begin{remark}
		One approach to compute $M$ is based on sum-of-squares computations as in  \cite{inflation:2021}.  For the bounds $M_1$ and $M_2$, one way to get these bounds is to sum the absolute values of the coefficients appearing in the appropriate systems.
	\end{remark}
	
	\ifthenelse{\mainfilecheck{1} > 0}{\begin{thm}\label{thm:main2}
			Suppose that $\cF$ is a square polynomial system where $z^\ast$ approximates a cluster of zeros.  If \Cref{algo:generalizedInflationcluster} succeeds, then it produces a pair of regions $\smallR$ and $\largeR$ containing $z^\ast$ and the cluster of zeros such that $\smallR\subseteq\largeR^\circ$.  Moreover, the number of zeros in the cluster is $d^\kappa$.
	\end{thm}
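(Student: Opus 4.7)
The plan is to certify the zero count via a multivariate Rouch\'e argument on the inflated system $H := U\circ\cF\circ T\circ S_\kappa^d$ and then translate the count back through the coordinate changes. Decomposing $H = H_{<d} + H_d + H_{>d}$ by total degree, I would first verify that on any sphere $\|x\| = \varepsilon$ with $\smallEpsilon \leq \varepsilon \leq \largeEpsilon$, homogeneity gives $\|H_d(x)\| \geq M\varepsilon^d$, while the bounds computed in the algorithm yield $\|H_{>d}(x)\| \leq M_1\varepsilon^{d+1}$ and $\|H_{<d}(x)\| \leq M_2$. The definitions $\smallEpsilon = (2M_2/M)^{1/d}$ and $\largeEpsilon = M/(2M_1)$, combined with $\smallEpsilon < \largeEpsilon$ (the success of the algorithm), imply that
\[
\|H_{<d}(x) + H_{>d}(x)\| \leq M_2 + M_1\varepsilon^{d+1} < M\varepsilon^d \leq \|H_d(x)\|
\]
on the entire closed interval $\varepsilon \in [\smallEpsilon, \largeEpsilon]$. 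Lemma~\ref{lem:RoucheVersion} then gives that $H$ and $H_d$ have the same number of zeros, counted with multiplicity, in the open ball $B_\varepsilon$.

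Next, I would show that $H_d$ has a single zero at the origin of multiplicity $d^n$. The bound $M > 0$ means $H_d$ does not vanish on the unit sphere, and homogeneity forces the origin to be its only common zero; a square homogeneous system of $n$ degree-$d$ forms with an isolated zero at the origin is a regular sequence of B\'ezout multiplicity $d^n$. \Cref{thm:HS-for-redular-kappa-d} applied to the nearby singular $\cG$ is what makes this structure achievable: the initial forms of $U\circ\cG\circ T$ are degree-$d$ forms in $x_1,\dots,x_\kappa$ not vanishing on the unit sphere in those variables, together with the linear forms $x_{\kappa+1},\dots,x_n$, all of which become homogeneous of degree $d$ after composition with $S_\kappa^d$; closeness of $\cF$ to $\cG$ then ensures that a computable $M>0$ persists for $H_d$. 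Combining with the previous paragraph, $H$ has exactly $d^n$ zeros (counted with multiplicity) in $B_\varepsilon$ for every $\varepsilon \in [\smallEpsilon, \largeEpsilon]$.

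Finally, I would translate the count back to $\cF$. Since $U$ is an invertible linear action on polynomials it preserves zero sets, while $T \circ S_\kappa^d : B_\varepsilon \to T(S_\kappa^d(B_\varepsilon))$ is a proper, $d^{n-\kappa}$-to-one branched covering. By conservation of multiplicity under such covers, the number of zeros of $\cF$ in $R(\varepsilon) := T(S_\kappa^d(B_\varepsilon))$, counted with multiplicity, equals $d^n / d^{n-\kappa} = d^\kappa$. Taking $\varepsilon = \smallEpsilon$ and $\varepsilon = \largeEpsilon$ yields the regions $\smallR$ and $\largeR$, both containing the same cluster of $d^\kappa$ zeros; $z^\ast$ lies in both because $T$ sends the origin to $z^\ast$, and $\smallR \subseteq \largeR^\circ$ because $B_{\smallEpsilon}$ is compactly contained in $B_{\largeEpsilon}$. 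The main obstacle in a fully rigorous write-up is the multiplicity bookkeeping on the ramification locus of $S_\kappa^d$ (the coordinate hyperplanes $x_i = 0$ for $i > \kappa$): away from this locus the cover is \'etale so conservation of multiplicity is immediate, while at the ramification the contributions are already absorbed into the Rouch\'e count for $H$, so the $\cF$-side total is $d^\kappa$ automatically.
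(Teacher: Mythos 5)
Your proposal is correct and follows essentially the same route as the paper: the decomposition of $U\circ\cF\circ T\circ S_\kappa^d$ into degree $<d$, $=d$, and $>d$ parts, the application of \cref{lem:RoucheVersion} with the bounds $M$, $M_1$, $M_2$ to obtain $d^n$ zeros of the inflated system in each ball of radius $\varepsilon\in[\smallEpsilon,\largeEpsilon]$, and division by the degree $d^{n-\kappa}$ of the inflation cover to arrive at $d^\kappa$ zeros of $\cF$ in the pulled-back regions. Your explicit attention to the ramification locus of $S_\kappa^d$ is a welcome addition that the paper leaves implicit.
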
}{\begin{theorem}\label{thm:main2}
			Suppose that $\cF$ is a square polynomial system where $z^\ast$ approximates a cluster of zeros.  If \Cref{algo:generalizedInflationcluster} succeeds, then it produces a pair of regions $\smallR$ and $\largeR$ containing $z^\ast$ and the cluster of zeros such that $\smallR\subseteq\largeR^\circ$.  Moreover, the number of zeros in the cluster is $d^\kappa$.
	\end{theorem}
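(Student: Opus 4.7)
The plan is to reduce the statement to a Rouch\'e argument on the transformed system $\cH:=U\circ\cF\circ T\circ S_\kappa^d$, count zeros in a ball around the origin in the new coordinates, and then transport the resulting regions back to the original domain via $T\circ S_\kappa^d$. First I would fix any radius $\varepsilon$ with $\smallEpsilon<\varepsilon<\largeEpsilon$. Since $\cH_d$ is homogeneous of degree $d$ and bounded below by $M$ on the unit sphere (which is guaranteed by \Cref{thm:HS-for-redular-kappa-d} applied to the nearby singular system $\cG$), one has $\|\cH_d(x)\|\geq M\varepsilon^d$ whenever $\|x\|=\varepsilon$. The upper bounds yield $\|\cH_{>d}(x)\|\leq M_1\varepsilon^{d+1}$ and $\|\cH_{<d}(x)\|\leq M_2$ on the unit ball. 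The inequality $\varepsilon<\largeEpsilon=M/(2M_1)$ gives $M_1\varepsilon^{d+1}<\tfrac{1}{2}M\varepsilon^d$, while $\varepsilon>\smallEpsilon=(2M_2/M)^{1/d}$ gives $M_2<\tfrac{1}{2}M\varepsilon^d$; adding these produces the Rouch\'e inequality $\|\cH(x)-\cH_d(x)\|<\|\cH_d(x)\|$ on the sphere of radius $\varepsilon$. By \Cref{lem:RoucheVersion}, $\cH$ and $\cH_d$ share the same zero count, with multiplicity, inside the closed ball of radius $\varepsilon$.

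Next I would count these zeros and pull them back. By \Cref{thm:HS-for-redular-kappa-d}, the components of $\cH_d$ form a regular sequence of $n$ forms of degree $d$ whose only common zero is the origin, so $\cH_d$ has total multiplicity $d^n$ on any closed ball containing the origin. Hence $\cH$ has exactly $d^n$ zeros (with multiplicity) inside the ball of radius $\varepsilon$. Since $U$ is invertible on systems and $S_\kappa^d$ is a proper finite map of algebraic degree $d^{n-\kappa}$, the preimage count identity produces $d^n/d^{n-\kappa}=d^\kappa$ zeros of $\cF$ inside the region $R(\varepsilon):=T(S_\kappa^d(\bar B(0,\varepsilon)))$ that the algorithm outputs. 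Setting $\varepsilon=\smallEpsilon$ and $\varepsilon=\largeEpsilon$ defines $\smallR$ and $\largeR$; the hypothesis $\smallEpsilon<\largeEpsilon$, together with continuous monotonicity of $R(\varepsilon)$ in $\varepsilon$, gives $\smallR\subseteq\largeR^\circ$. Finally, $z^\ast$ corresponds to the origin of the new coordinates through the affine translation of \Cref{algo:preinflation}, so $z^\ast$ lies in both regions, and the $d^\kappa$ zeros of $\cF$ captured inside $\largeR$ are the cluster being isolated.

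The main obstacle is the nonzero lower-order part $\cH_{<d}$, which has no analogue in the exact singular case of \Cref{thm:main} and which arises precisely because $\cF\neq\cG$: the pre-inflation transformations applied to $\cF$ retain residual terms of every degree less than $d$. For very small $\|x\|$ these residual terms dominate $\cH_d$, while for large $\|x\|$ the higher-order terms dominate; only in the Goldilocks window $(\smallEpsilon,\largeEpsilon)$ does $\cH_d$ dominate on both sides, and the algorithm's success condition $\smallEpsilon<\largeEpsilon$ is exactly the demand that this window be nonempty. A secondary bookkeeping point will be to confirm that the $d^{n-\kappa}$-to-one behavior of $S_\kappa^d$ transfers correctly to zero counts of $\cF$ in the image region; this is standard for finite proper maps once one knows, as guaranteed above, that the origin is not itself a zero of $\cF$ and that the region chosen is small enough to avoid other zeros of $\cF$. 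The remaining verifications are direct generalizations of the proof of \Cref{thm:main}.
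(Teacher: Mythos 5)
Your proposal follows essentially the same route as the paper: the three-part decomposition into $(\cdot)_{<d}$, $(\cdot)_d$, $(\cdot)_{>d}$ with the bounds $M$, $M_1$, $M_2$ producing the window $[\smallEpsilon,\largeEpsilon]$, the application of \Cref{lem:RoucheVersion} to get the $d^n$ count, and the $d^{n-\kappa}$-to-one covering by $S_\kappa^d$ (together with the invariance of the ball under the deck transformations) to convert $d^n$ into $d^\kappa$ zeros of $\cF$ in the pulled-back regions. The one small slip is attributing the positivity of $M$ to \Cref{thm:HS-for-redular-kappa-d} applied to $\cG$: that theorem governs the transformed $\cG$, whereas $M$ must bound the degree-$d$ part of the transformed $\cF$ from below, and the existence of such a positive $M$ is part of what ``the algorithm succeeds'' means rather than a consequence of the theorem.
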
}
	
	\subsection{Constructing a singular system}
	
	In order to complete the steps outlined in \Cref{sec:isolating}, we need to be able to construct an appropriate singular system $\cG$.  One way to construct such a system is outlined in \cite[Section 2.1]{inflation:2021} via the singular value decomposition of the Jacobian $D\cF(z^\ast)$.  This construction also provides $\kappa$ as a count of the number of small singular values of the Jacobian.  
	
	For any $d>0$, we may apply \Cref{algo:generalizedInflation} to $\cG$ to construct a $(\kappa,d,d)$-pre-inflatable system.  It is unlikely that the resulting system has the origin as a regular zero of breadth $\kappa$ and order $d$.
	
	Even though the polynomial system resulting from \Cref{algo:generalizedInflation} might not be amenable to inflation itself, the constructed transformations, when applied to $\cF$ as in \Cref{sec:isolating}, may succeed in isolating the cluster of $\cF$.  We have experimental evidence that applying these transformations will be successful when extra terms in $p_1,\dots,p_\kappa$ have small coefficients.
	
	\begin{remark}
		The value of $d$ in the construction of the pre-inflatable system may either be given or guessed through the computations.  In particular, we may apply \Cref{algo:generalizedInflation} with many different values of $d$ until the degree-$d$ homogeneous part of the resulting system has enough terms with coefficients larger than some tolerance as to not vanish on the unit sphere.
	\end{remark}
	
	\section{Irregular systems}\label{sec:irregular}
	Even when the approach in \Cref{sec:singular} fails for singular systems, we present ways to isolate the cluster and estimate its size.  Three instances where the approach in \Cref{sec:singular} may fail are when the origin is a regular zero of breadth $\kappa$ and order $d$, when the initial forms of the $(\kappa,d,d)$-pre-inflatable system vanish on the unit sphere, and when the initial system is not square.
	
	\subsection{Uneven inflation}
	
	The structure of the polynomial system in \cref{thm:HS-for-redular-kappa-d} are designed so that we know the structure of the system after inflation, see \cref{sec:mainproof}.  In particular, several of the steps in the construction of a pre-inflatable system are designed to control which terms appear in the initial form of the system after inflation.
	
	When the initial forms of a polynomial system do not vanish on the unit sphere, but they do not have the appropriate degrees, we may apply an inflation operator that changes the degree of each variable individually.  To illustrate this, consider the following motivating example:
	
	\begin{example}
		Consider the following family of polynomial systems, where $a$ is a parameter:
		$$
		\cG=\begin{Bmatrix}
			x_1\\
			x_2^2+ax_3^2+x_3^4\\
			x_3^3
		\end{Bmatrix}.
		$$
		An initial attempt might be to inflate by replacing $x_1$, $x_2$, and $x_3$ by $x_1^6$, $x_2^3$, and $x_3^2$, respectively.  Unfortunately, after this inflation step, the resulting system is
		$$
		\begin{Bmatrix}
			x_1^6\\
			ax_3^4+x_2^6+x_3^8\\
			x_3^6
		\end{Bmatrix}.
		$$
		We cannot apply our approaches unless $a=0$, in which case the initial forms are all of degree $6$ and do not vanish on the unit sphere.  
		
		When $a=0$, the inflated system has a zero of multiplicity $6^3=216$ at the origin and Rouch\'e's theorem can be applied to isolate these zeros.  Moreover, since the inflation map is $36$-to-one, this region isolates the $6$ solutions of the original system.
		
		When $a\not=0$, it is impossible to choose an inflation map so that the initial forms are all of the same degree.  In this case, the inflation approach fails and we must consider alternate methods.
	\end{example}
	
	For a general singular system $\cG$ where zero is not a regular zero, suppose that it is possible to replace each variable by a power so that all the initial terms of the resulting system have the same degree.  In this case, Rouch\'e's theorem, see \cref{lem:RoucheVersion} can be applied to isolate the cluster.  For this approach to succeed, it is usually important that the initial forms of the system $\cG$ have some structure and that problem-specific higher degree terms have a zero coefficient.
	
	\subsection{Upper bounds}
	
	One may attempt a symbolic transformation that leads to a system where Rouch\'e's theorem applies, see \cref{lem:RoucheVersion}.  Given a singular system $\cG$ of $m$ functions in $n$ unknowns with $m\geq n$ with an isolated zero at the origin, there is an $n\times m$ matrix $T$ such that the initial forms of the polynomials in $T\cG$ do not vanish on the unit sphere.  Even in the case $m=n$, it is possible to find a suitable $T$ that is invertible in a neighborhood of the singularity at the origin. Therefore, the multiplicity of the origin as a zero of $T\cG$ is only an upper bound of the multiplicity of $\cG$.
	
	One popular ``rewriting'' method is to derive $T$ from a local Gr\"obner basis computation.  In particular, we choose $n$ elements whose initial terms are pure powers from the Gr\"obner basis.  This process also applies to the overdetermined case because we are choosing only $n$ elements from the Gr\"obner basis regardless of the number of equations in $\cF$.  We illustrate this method in the following example:
	
	\begin{example}
		Consider the following singular polynomial system
		$$
		\cG=\begin{Bmatrix}
			x_1x_2-x_3^3\\[.05cm]
			x_2x_3-x_1^3\\[.05cm]
			x_1x_3-x_2^3
		\end{Bmatrix}.
		$$
		The initial forms simultaneously vanish on the unit sphere in the coordinate directions.  From a local Gr\"obner basis calculation, there are three elements in the basis whose initial terms are pure powers:
		$$
		\begin{Bmatrix}
			x_2^4-x_3^4\\[.05cm]
			x_1^4-x_2^4\\[.05cm]
			x_3^5-x_1^3x_2^3
		\end{Bmatrix}.
		$$
		Therefore, we can find a system of polynomials in the ideal generated by $\cG$ that have the same degree for their initial forms.  In particular, we have the elements
		$$
		\cP=\begin{Bmatrix}
			x_2^5-x_2x_3^4\\[.05cm]
			x_1^5-x_1x_2^4\\[.05cm]
			x_3^5-x_1^3x_2^3
		\end{Bmatrix}.
		$$
		This system can be obtained by multiplying the equations of $\cG$ by the following matrix, derived from a local Gr\"obner basis calculation:
		$$T = \begin{pmatrix}x_2x_3&0&-x_2^2\\[.05cm]0&-x_1^2&x_1x_2\\[.05cm]-x_3^2&x_2^3&x_2x_3\end{pmatrix}.$$ 
		
		Since the initial forms do not vanish on the unit sphere, we can find a lower bound $M$ for $\|\cP_5\|$ on the unit sphere.  Then, by following the approach of \Cref{sec:singular}, we find a region $\largeR$ that isolates the singularity at the origin.  In this case, the singularity has multiplicity at most $4\cdot 4\cdot 5=80$ while the true multiplicity is $11$.
	\end{example}
	
	In the cluster case, i.e., when $\cF$ is given with $z^\ast$ approximating a cluster of zero, a suitable $T$ can be found by executing the steps of a Gr\"obner basis computation while dropping terms with small coefficients to construct $\cG$.  As long as $T\cF-\cG$ is sufficiently small, then Rouch\'e's theorem can be used.
	
	Since the multiplicity of $z^\ast$ may increase when $\cF$ transforms into $\cG$, this increase also applies to the size of the corresponding cluster of $T\cF$. Thus, this process may not provide the exact size of the cluster, but an upper bound of it.
	
	\section{Proofs}\label{sec:proofs}
	We provide proofs for several of the stated facts in the paper.  
	
	\subsection{Pre-inflatable form}
	We prove that the procedure described in \cref{sec:constructingRegularZeros} and \cref{algo:preinflation} produces a $(\kappa,k,\ell)$-pre-inflatable system for any square polynomial system of breadth $\kappa$.
	
	\begin{lemma}\label{lem:preinflatable}
		Let $\cG$ be a square polynomial system with a singular zero at $z^\ast$ of breadth $\kappa$.  The result of \cref{algo:preinflation} with parameters $k$ and $\ell$ on $\cG$ is a $(\kappa,k,\ell)$-pre-inflatable system of polynomials with a zero at the origin whose multiplicity is the same as the multiplicity of $z^\ast$ for $\cG$.
	\end{lemma}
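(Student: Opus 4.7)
The plan is to walk through the four steps of \Cref{algo:preinflation} in order, verifying at each step that (a) the applied transformation is a local isomorphism near the origin, so that multiplicity is preserved, and (b) the structural properties already obtained in earlier steps survive. The progression peels off the three conditions of \Cref{def:preinflatable}: Step A yields the kernel condition, Step B fixes the linear parts of $b_{\kappa+1},\dots,b_n$, Step C achieves the degree condition on $p_1,\dots,p_\kappa$, and Step D achieves the degree condition on $p_{\kappa+1},\dots,p_n$.

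Steps A and B require almost no work. The affine map $A$ is globally bijective, and with its linear part chosen so that $e_1,\dots,e_\kappa$ map into $\ker D\cG(z^\ast)$, we obtain $\ker D\cA(0)=\langle e_1,\dots,e_\kappa\rangle$. The matrix $B\in\mathrm{GL}_n(\CC)$ is constant, so it preserves both the ideal and the Jacobian kernel, while realizing the desired row reduction on the linear parts.

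Step C I would carry out iteratively, sweeping the total degree $s$ from $2$ up to $k$. For each $i\leq\kappa$, any monomial of degree $s$ in $b_i$ that involves some $x_j$ with $j>\kappa$ can be written $m\cdot x_j$ and killed by subtracting $m\cdot b_j$, since by Step~B each $b_j$ with $j>\kappa$ has linear part $x_j$ plus higher-order terms. Each such elementary operation adds a polynomial multiple of a lower block-row to an upper block-row, so the accumulated transformation $C_k$ is block-unitriangular (identity blocks on the diagonal, polynomial entries in the upper-right block), hence invertible over $\CC[x_1,\dots,x_n]$; this preserves the ideal and the Jacobian kernel. Only monomials of strictly larger total degree are introduced, which ensures termination of the sweep and preservation of cancellations already achieved at earlier degrees.

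Step D is the subtlest and the main obstacle. I would construct the polynomials $q_{\kappa+1},\dots,q_n\in\CC[x_1,\dots,x_\kappa]$ degree by degree, from $2$ up to $\ell$. At each degree $s$, the pure $(x_1,\dots,x_\kappa)$-part of degree $s$ in the expansion $c_i(x_1,\dots,x_\kappa,x_{\kappa+1}+q_{\kappa+1},\dots,x_n+q_n)$ depends linearly on the degree-$s$ homogeneous piece of $q_i$ through the $x_i$ linear term of $c_i$, plus a contribution determined by already-fixed lower-degree pieces; solving this triangular linear system cancels the offending monomials. Because each $q_i$ starts at degree $2$, the Jacobian of $D_\ell$ at the origin equals $I_n$, so $D_\ell$ is a local analytic isomorphism and the multiplicity is preserved. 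The nontrivial verification is that substituting $x_j\mapsto x_j+q_j$ into $c_1,\dots,c_\kappa$ does not destroy the condition achieved in Step~C: a monomial of $c_i$ (with $i\leq\kappa$) involving some $x_j$ for $j>\kappa$ expands into monomials whose total degrees are at least that of the original, so any term of $p_i$ that still involves $x_{\kappa+1},\dots,x_n$ keeps degree $>k$. Combined with the preservation of the Jacobian kernel and of the linear parts of $p_{\kappa+1},\dots,p_n$ (again because $q_i$ has no linear term), this verifies all three conditions of \Cref{def:preinflatable}.
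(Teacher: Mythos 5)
Your proposal is correct and follows essentially the same route as the paper's proof: step-by-step verification that each of the four transformations preserves multiplicity (invertible change of coordinates or of the generating set) together with an inductive cancellation argument for Steps C and D showing that eliminating a low-degree term only introduces terms of strictly higher degree. The only cosmetic difference is that for Step D you justify multiplicity preservation via the Jacobian of $D_\ell$ being the identity (a local isomorphism), while the paper notes that such substitutions preserve leading forms and hence the local Hilbert series; both arguments are valid.
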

	\begin{proof}
		We first show that the multiplicity of the origin and $z^\ast$ are the same for the input and output systems.  The first step of the algorithm is an invertible affine transformation on the domain, and such transformations do not change the multiplicity of a zero.  The second and third steps replace the system with a new system that generates the same ideal, hence the multiplicity does not change.  Finally, the last step uses transformations of the form $x_i\mapsto x_i+q_i(x_1,\dots,x_\kappa)$, which preserve leading forms of all polynomials in the ideal, and, hence preserve the Hilbert series and multiplicity.
		
		Now, we prove that the final system is $(\kappa,k,\ell)$-pre-inflatable.  By the discussion above, the breadth of the system does not change under the steps of \cref{algo:preinflation}, therefore, the final system has the correct breadth.  In addition, the affine transformation in the first step rotates the domain so that the resulting Jacobian has the correct kernel.  The second and third steps do not change the kernel of the Jacobian, and the last step maintains the initial terms, so the Jacobian is also preserved.  The first and second steps prepare the initial terms of the last $n-\kappa$ polynomials via standard linear algebra, and these initial terms are not changed in the last two steps.
		
		Finally, the third and fourth steps can be broken down into a sequence of cancellation steps, each of which remove a term of low degree and replace it with terms of higher degree.  Through induction, all of the desired terms have coefficient zero.  Therefore, the resulting system is in $(\kappa,k,\ell)$-pre-inflatable form.
	\end{proof}
	
	This construction explicitly leads to the following corollary, which proves one of the conditions in \cref{thm:HS-for-redular-kappa-d}.
	
	\begin{corollary}
		Let $\cG$ be a square system in $n$ variables with a zero at $z^\ast$.  Suppose that $z^\ast$ is a zero of breadth $\kappa$ and order $d$.  \cref{algo:preinflation} with parameters $k=\ell=d$ applied to this system results in a $(\kappa,d,d)$-pre-inflatable system such that the initial form of $p_i$ is $x_i$ for $\kappa+1\leq i\leq n$.
	\end{corollary}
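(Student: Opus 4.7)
The plan is to invoke \cref{lem:preinflatable} for the bulk of the content and then track what happens to the linear part of the last $n-\kappa$ polynomials through each of the four steps of \cref{algo:preinflation}. The $(\kappa,d,d)$-pre-inflatable conclusion is immediate from \cref{lem:preinflatable}, so the only new claim is that the initial form of $p_i$ equals $x_i$ for $\kappa+1\leq i\leq n$. Since the chosen graded local order places larger weight on lower total degree, it suffices to show that each $p_i$ (with $i>\kappa$) has no constant term and that its degree-$1$ homogeneous part is exactly $x_i$.

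Step \ref{step:A} (the affine change of coordinates) sends $z^\ast$ to the origin, so the resulting $\cA$ has no constant term, and it aligns the kernel of the Jacobian with $\langle e_1,\dots,e_\kappa\rangle$. In particular, the linear part of each $a_i$ depends only on $x_{\kappa+1},\dots,x_n$. Step \ref{step:B} performs row-reduction on the linear parts and rescales, producing $\cB$ in which the linear part of $b_i$ is precisely $x_i$ for $i>\kappa$; moreover $b_1,\dots,b_\kappa$ have no linear part. Step \ref{step:C} may be chosen as described immediately after the algorithm: $C_k$ adds $\CC[x]$-multiples of the $b_j$ with $j>\kappa$ to $b_1,\dots,b_\kappa$ in order to cancel the low-degree cross terms, while leaving the last $n-\kappa$ polynomials untouched. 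Concretely, $C_k$ is realized by a unipotent block-upper-triangular matrix in $\mathrm{GL}_n(\CC[x]_{(0)})$ with the identity in the lower-right $(n-\kappa)\times(n-\kappa)$ block, so $c_i = b_i$ for $i>\kappa$ and the initial form of $c_i$ remains $x_i$.

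The only step that warrants a moment's care is step \ref{step:T}, the change of variables $D_\ell$, which I expect to be the main (modest) obstacle. By construction $D_\ell$ is built from substitutions of the form $x_j\mapsto x_j+q_j(x_1,\dots,x_\kappa)$ for $j>\kappa$, and the polynomials $q_j$ consist only of monomials of total degree at least $2$. For $i>\kappa$, write $c_i = x_i + r_i$ where every term of $r_i$ has total degree at least $2$. Applying $D_\ell$ yields
\[
p_i = (x_i + q_i(x_1,\dots,x_\kappa)) + r_i\bigl(x_1,\dots,x_\kappa, x_{\kappa+1}+q_{\kappa+1},\dots,x_n+q_n\bigr),
\]
and both $q_i$ and the substituted $r_i$ contribute only terms of total degree at least $2$. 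Hence the degree-$1$ part of $p_i$ is $x_i$, and since $p_i$ has no constant term, its initial form under the graded local order is exactly $x_i$. Combined with the pre-inflatable conclusion from \cref{lem:preinflatable}, this gives the corollary. \qed
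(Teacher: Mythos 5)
Your proposal is correct and follows essentially the same route as the paper, which states this corollary as an immediate consequence of the construction: the proof of \cref{lem:preinflatable} already observes that the first two steps set the linear parts of the last $n-\kappa$ polynomials to $x_{\kappa+1},\dots,x_n$ and that the last two steps preserve them. Your explicit verification that $C_k$ leaves $c_i=b_i$ for $i>\kappa$ and that the substitutions $x_j\mapsto x_j+q_j$ with $\deg q_j\geq 2$ cannot introduce constant or new linear terms is exactly the detail the paper leaves implicit.
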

	
	\subsection{Regular zero form}\label{sec:mainproof}
	The following proof is an algorithmic proof of the remaining two conditions in \cref{thm:HS-for-redular-kappa-d}.  It provides a construction of an analytic change of variables that transforms a system with a regular zero of breadth $\kappa$ and order $d$ into one of the desired form.
	
	Before beginning the proof, we introduce some notation and a fact about Hilbert series.  For series $A(t)=\sum_{i\geq 0}a(i)t^i$ and $B(t)=\sum_{i\geq 0}b(i)t^i$, we let $A(t)\geq B(t)$ if $a(i)\geq b(i)$ for all $i$.  In addition, we consider the following lemma for the proof:
	\begin{lemma}\cite[Lemma 1]{Froberg:1985}\label{lem:FrobergLemma1}
		Consider $\cF=\{f_1,\dots, f_n\}$ and $\cG=\{g_1,\dots, g_n\}$ where $f_1,\dots,f_n,g_1,\dots,g_n$ are homogeneous and $g_1,\dots,g_n$ are generic.  If $\deg g_i=\deg f_i$, then, $HS_\cG(t)\leq HS_\cF(t)$.
	\end{lemma}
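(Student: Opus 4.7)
The plan is to reinterpret the Hilbert series inequality degree by degree and then to use the fact that, for a linear map whose matrix entries are polynomial in parameters, the locus where the rank is maximal is Zariski-open. Set $R=\CC[x_1,\dots,x_n]$ and write $d_j=\deg f_j=\deg g_j$.

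First I would translate the inequality. Since $h_\cH(i):=\dim_\CC(R/\langle\cH\rangle)_i=\dim_\CC R_i-\dim_\CC\langle\cH\rangle_i$ and $\dim_\CC R_i$ depends only on $i$, the coefficientwise inequality $HS_\cG(t)\le HS_\cF(t)$ is equivalent to
\[
\dim_\CC\langle\cG\rangle_i\;\ge\;\dim_\CC\langle\cF\rangle_i\qquad\text{for every }i\ge 0.
\]
So the task reduces to showing that a generic sequence of forms of prescribed degrees makes the dimension of each graded ideal piece as large as possible.

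Next, for any sequence $\cH=(h_1,\dots,h_n)$ of forms of degrees $d_1,\dots,d_n$, I would introduce the multiplication map
\[
\mu_i^{\cH}\colon R_{i-d_1}\oplus\cdots\oplus R_{i-d_n}\longrightarrow R_i,\qquad (q_1,\dots,q_n)\mapsto\sum_{j=1}^n q_j h_j,
\]
whose image is exactly $\langle\cH\rangle_i$, so that $\dim_\CC\langle\cH\rangle_i=\operatorname{rank}(\mu_i^{\cH})$. Fixing monomial bases of source and target, the matrix of $\mu_i^{\cH}$ has entries that are linear forms in the coefficients of $h_1,\dots,h_n$. Therefore the set
\[
U_i:=\{\cH\in R_{d_1}\times\cdots\times R_{d_n}:\operatorname{rank}(\mu_i^{\cH})\text{ is maximum}\}
\]
is the complement of the vanishing of the appropriate minors, hence a nonempty Zariski-open subset of the affine parameter space.

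The conclusion would come from declaring the set $U:=\bigcap_{i\ge 0}U_i$ to be the genericity locus. A countable intersection of nonempty Zariski-open subsets of an irreducible affine space over $\CC$ is still nonempty (indeed dense), because the uncountable field $\CC$ cannot be written as a countable union of proper subvarieties. For any $\cG\in U$ and any $\cF$ as in the statement, $\operatorname{rank}(\mu_i^{\cG})$ is maximal and so at least $\operatorname{rank}(\mu_i^{\cF})$ for every $i$, giving $\dim_\CC\langle\cG\rangle_i\ge\dim_\CC\langle\cF\rangle_i$ and hence $HS_\cG(t)\le HS_\cF(t)$.

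The main subtlety, rather than an obstacle, is the precise meaning of ``generic'': the maximum-rank condition is imposed in infinitely many degrees simultaneously, so one needs a countable-intersection argument that relies on the uncountability of $\CC$. Once this is handled, the rest of the proof is just linear algebra combined with the standard openness of the maximum-rank locus.
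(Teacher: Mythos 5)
Your argument is correct, and it is essentially the standard proof of this fact: the paper itself does not prove the lemma but cites it from Fr\"oberg, whose Lemma~1 is established by exactly this semicontinuity-of-rank argument applied to the multiplication maps $\mu_i^{\cH}\colon\bigoplus_j R_{i-d_j}\to R_i$, whose matrices depend linearly on the coefficients of the forms. All the steps check out: the translation of the coefficientwise inequality $HS_\cG(t)\le HS_\cF(t)$ into $\dim_\CC\langle\cG\rangle_i\ge\dim_\CC\langle\cF\rangle_i$, the openness and nonemptiness of each maximal-rank locus $U_i$, and the nonemptiness of $\bigcap_i U_i$ over the uncountable field $\CC$. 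One optional simplification: the intersection can be made finite, since the regular-sequence locus is a single nonempty Zariski-open set on which $\langle\cH\rangle_i=R_i$ for all $i>\sum_j(d_j-1)$, so only finitely many degrees impose genuine conditions and ``generic'' can be taken to mean membership in one nonempty Zariski-open subset; but your countable-intersection argument is also valid as stated.
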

	Moreover, generic homogeneous forms of degree $d$ form a regular sequence, and the Hilbert series of a regular sequence is $\frac{(1-t^d)^n}{(1-t)^n}$.

	\begin{lemma}
		Let $\cG$ be a square system in $n$ variables with a zero at $z^\ast$.  Suppose that $z^\ast$ is a zero of breadth $\kappa$ and order $d$.  \cref{algo:preinflation} with parameters $k=\ell=d$ applied to this system results in a $(\kappa,d,d)$-pre-inflatable system such that the initial degree of each $p_i$ is equal to $d$ for $1\leq i\leq\kappa$.
	\end{lemma}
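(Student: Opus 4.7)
The plan is to combine \cref{lem:preinflatable} with the Hilbert series hypothesis. First, I invoke \cref{lem:preinflatable} to record that the output $\cP=\{p_1,\ldots,p_n\}$ of \cref{algo:preinflation} with $k=\ell=d$ is $(\kappa,d,d)$-pre-inflatable and shares the local Hilbert series of $\cG$ at $z^\ast$, namely $(1+t+\cdots+t^{d-1})^\kappa$. Since $p_{\kappa+1},\ldots,p_n$ have linear parts $x_{\kappa+1},\ldots,x_n$, the formal implicit function theorem produces unique power series $\phi_j(x_1,\ldots,x_\kappa)\in \CC[[x_1,\ldots,x_\kappa]]$ with $p_j(x_1,\ldots,x_\kappa,\phi_{\kappa+1},\ldots,\phi_n)=0$ for $j>\kappa$; because the only terms of $p_j$ in $x_1,\ldots,x_\kappa$ alone have degree $>d$, Newton iteration shows $\phi_j$ has order $>d$. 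Setting $\tilde p_i=p_i(x_1,\ldots,x_\kappa,\phi_{\kappa+1},\ldots,\phi_n)$ for $i\leq\kappa$ gives an isomorphism of local rings $\CC[[x_1,\ldots,x_n]]/(p_1,\ldots,p_n)\cong \CC[[x_1,\ldots,x_\kappa]]/(\tilde p_1,\ldots,\tilde p_\kappa)$ sending maximal ideal to maximal ideal, so the $\mathfrak{m}$-adic Hilbert series is preserved.

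Next I would relate the initial degree of $\tilde p_i$ to that of $p_i$. Every monomial of $p_i$ ($i\leq\kappa$) involving any $x_j$ with $j>\kappa$ has degree $>d$, and substituting $\phi_j$ of order $>d$ only increases these degrees further. Hence $\tilde p_i$ and the restriction $p_i|_{x_{\kappa+1}=\cdots=x_n=0}$ agree through degree $d$, so the initial degree of $p_i$ equals that of $\tilde p_i$, provided the latter is $\leq d$.

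Now let $d_i$ denote the initial degree of $\tilde p_i$ and $J=(\tilde p_1,\ldots,\tilde p_\kappa)\subset \CC[[x_1,\ldots,x_\kappa]]$. For $0\leq i\leq d-1$, the coefficient of $t^i$ in $(1+t+\cdots+t^{d-1})^\kappa$ equals $\binom{\kappa+i-1}{i}$, the full count of monomials of degree $i$ in $\kappa$ variables, so $\mathrm{in}(J)$ contains no nonzero element of degree $<d$; in particular $d_i\geq d$ for each $i$. The coefficient of $t^d$ is $\binom{\kappa+d-1}{d}-\kappa$, so $\dim \mathrm{in}(J)_d=\kappa$. On the other hand, writing $f_i=c_i+g_i$ with $c_i\in\CC$ and $g_i\in\mathfrak{m}$, the degree-$d$ part of $\sum f_i\tilde p_i$ equals $\sum_{i:\,d_i=d} c_i\cdot \mathrm{in}(\tilde p_i)$ since every $g_i\tilde p_i$ has order $>d$. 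Therefore $\dim\mathrm{in}(J)_d\leq\#\{i:d_i=d\}$, and combining with $\dim\mathrm{in}(J)_d=\kappa$ forces $d_i=d$ for all $i$. Pulling back through the preceding paragraph, each $p_i$ for $1\leq i\leq\kappa$ has initial degree exactly $d$.

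The main obstacle is the justification that the substitution $x_j\mapsto\phi_j$, though not graded, preserves the $\mathfrak{m}$-adic Hilbert series and the degree-$\leq d$ pieces on both sides. The decisive point is that $\phi_j$ has order strictly greater than $d$, so in low degrees the change of coordinates acts as the identity on $x_1,\ldots,x_\kappa$ and contributes nothing else---exactly what is needed for the degree-by-degree comparison that drives the final Hilbert-series bookkeeping.
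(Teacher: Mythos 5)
Your proof is correct, and it reaches the conclusion by a route that genuinely differs from the paper's in two respects. First, you eliminate $x_{\kappa+1},\dots,x_n$ outright via the formal implicit function theorem, reducing to an ideal $J=(\tilde p_1,\dots,\tilde p_\kappa)$ in $\kappa$ variables with the same local Hilbert series; the paper stays in $n$ variables and instead argues directly that any element of the ideal whose initial term has degree $\le d$ and involves only $x_1,\dots,x_\kappa$ must acquire that term from $q_1p_1+\dots+q_\kappa p_\kappa$, using the pre-inflatable structure of $p_{\kappa+1},\dots,p_n$. Second, for the key step that no initial degree exceeds $d$, the paper invokes Fr\"oberg's lemma (\cref{lem:FrobergLemma1}) to bound the Hilbert series of $\langle(p_1)_d,\dots,(p_\kappa)_d\rangle$ from below by that of $\kappa-\ell$ generic forms and derives a contradiction in the coefficient of $t^d$; you replace this with the elementary observation that the degree-$d$ piece of the ideal of initial forms of $J$ is spanned by the leading forms of those $\tilde p_i$ of order exactly $d$ (since $g_i\tilde p_i$ has order $>d$ whenever $g_i\in\mathfrak{m}$), so its dimension $\kappa$ forces all $\kappa$ generators to have order $d$. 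Your dimension count is more self-contained---Fr\"oberg's lemma is not actually needed in degree $d$ itself---at the cost of the implicit-function-theorem bookkeeping, which you justify correctly: because each $\phi_j$ has order $>d$, the substitution acts as the identity through degree $d$, preserving both the degree-$\le d$ parts of $p_1,\dots,p_\kappa$ and the $\mathfrak{m}$-adic Hilbert function. Both arguments share the same first half, namely that the coefficients of $t^0,\dots,t^{d-1}$ being full monomial counts in $\kappa$ variables force each initial degree to be at least $d$.
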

	
	\begin{proof}
		In order to prove this lemma, we begin with the transformation from \Cref{sec:constructingRegularZeros} that transforms the original system $\cG$ into the system $\cP_{d,d}=\{p_1,\dots,p_n\}$ that is a $(\kappa,d,d)$-pre-inflatable system.  As mentioned in the proof of \cref{lem:preinflatable}, the Hilbert series is unchanged under this transformation.
		
		From the definition of a pre-inflatable system, in $\{p_1,\dots,p_k\}$ the only monomials of degree at most $d$ which appear in $p_1,\dots,p_\kappa$ involve only the variables $x_1,\dots,x_\kappa$.  On the other hand, since the initial term of $g_i$ for $i>\kappa$ is $x_i$, it follows that no monomial involving any of $x_{\kappa+1},\dots,x_n$ can appear as a standard monomial.  In addition, the coefficients in the local Hilbert series for $\{1,\dots,t^{d-1}\}$ are the number of monomials in $\kappa$ variables.  This implies that $p_1,\dots,p_\kappa$ cannot have any monomials of degree less than $d$.
		
		We now prove that the initial degree of $p_1,\dots,p_\kappa$ must be $d$.  Suppose that $p\in\langle p_1,\dots,p_n\rangle$ such that the initial term of $p$ is of degree $d$ and involves only $x_1,\dots,x_\kappa$.  Briefly, we write $p=\sum q_ip_i$.  Since, by construction, the monomials in $p_{\kappa+1},\dots,p_n$ involving only $x_1,\dots,x_\kappa$ must have degree larger than $d$, it follows that the initial term of $p$ does not appear in any $q_ip_i$ for $i>\kappa$.  On the other hand, since the initial degree of $p_1,\dots,p_\kappa$ is at least $d$, it must be that the initial term of $p$ is an initial term of an element of $\langle (p_1)_d,\dots,(p_\kappa)_d\rangle$, where $(p_i)_d$ denotes the homogeneous part of $p_i$ of degree $d$.  
		
		This observation implies that the standard monomials of $\langle \cP_{d,d}\rangle$ of degree $d$ are the same as the standard monomials of degree $d$ that only involve $x_1,\dots,x_\kappa$ of $\langle (p_1)_d,\dots,(p_\kappa)_d\rangle$.  Suppose that $(p_i)_d=0$ for $\ell$ values of $i$, where $1\leq i\leq \kappa$.  Then, by Lemma \ref{lem:FrobergLemma1},
		the coefficient of $t^d$ in the Hilbert series of $\langle (p_1)_d,\dots,(p_\kappa)_d\rangle$ is at least the coefficient of $t$ in $\frac{(1-t^d)^{\kappa-\ell}}{(1-t)^\kappa}.$  If $\ell>0$, then this coefficient is (strictly) greater than the corresponding coefficient in $(1+t+\dots+t^{d-1})^\kappa$, a contradiction.  Hence, $\ell=0$ and the initial degree of each of $p_1,\dots,p_\kappa$ must be $d$.
	\end{proof}

	\begin{lemma}\label{lem:regularsequence}
		Let $\cG$ be a square system in $n$ variables with a zero at $z^\ast$.  Suppose that $z^\ast$ is a zero of breadth $\kappa$ and order $d$.  \cref{algo:preinflation} with parameters $k=\ell=d$ applied to this system results in a $(\kappa,d,d)$-pre-inflatable system such that the initial forms of $p_1,\dots,p_\kappa$ do not vanish on the unit sphere in $x_1,\dots,x_\kappa$.
	\end{lemma}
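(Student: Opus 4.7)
The plan is to reduce to an analytic complete intersection in $\kappa$ variables by eliminating $x_{\kappa+1},\dots,x_n$ via the implicit function theorem, and then apply a classical multiplicity inequality to force the initial forms $(p_1)_d,\dots,(p_\kappa)_d$ to be a regular sequence in $\CC[x_1,\dots,x_\kappa]$. By the preceding lemma, each $(p_i)_d$ for $i\leq\kappa$ is a nonzero form of degree $d$ lying in $\CC[x_1,\dots,x_\kappa]$, since the pre-inflatable condition confines any term of $p_i$ involving $x_{\kappa+1},\dots,x_n$ to degree strictly greater than $d$. Once regularity is established, the conclusion follows: a regular sequence of $\kappa$ homogeneous forms in $\kappa$ variables has only the origin as a common zero in $\CC^\kappa$, so in particular does not vanish on the unit sphere.

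For the reduction, the implicit function theorem applies to $p_{\kappa+1},\dots,p_n$: since their initial forms are $x_{\kappa+1},\dots,x_n$, the Jacobian block in the last $n-\kappa$ variables is invertible at the origin. This yields analytic functions $\phi_j(x_1,\dots,x_\kappa)$ for $j>\kappa$ with $\phi_j(0)=0$ and vanishing linear parts (the tangent space of $V(p_{\kappa+1},\dots,p_n)$ at the origin is $\{x_{\kappa+1}=\dots=x_n=0\}$), such that near the origin $V(p_{\kappa+1},\dots,p_n)$ is the graph $x_j=\phi_j$. Set $\tilde p_i(x_1,\dots,x_\kappa):=p_i(x_1,\dots,x_\kappa,\phi_{\kappa+1},\dots,\phi_n)$ for $i\leq\kappa$. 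By the pre-inflatable property, every term of $p_i$ involving an eliminated variable has degree greater than $d$, and composition with $\phi_j=O(\|x\|^2)$ cannot decrease degree, so the initial form of $\tilde p_i$ equals $(p_i)_d$. The induced map $\CC\{x_1,\dots,x_n\}/\langle p_1,\dots,p_n\rangle\to\CC\{x_1,\dots,x_\kappa\}/\langle\tilde p_1,\dots,\tilde p_\kappa\rangle$ is an isomorphism of analytic local algebras, so the local multiplicity at the origin is preserved and equal to $d^\kappa$.

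The main ingredient, and the hard part, is the classical multiplicity inequality for isolated analytic complete intersections: for $\kappa$ analytic functions of orders $d_1,\dots,d_\kappa$ in $\kappa$ variables cutting out the origin as an isolated zero, the local multiplicity is at least $\prod_i d_i$, with equality if and only if the initial forms have only the origin as a common zero (equivalently, form a regular sequence in $\CC[x_1,\dots,x_\kappa]$). Since the multiplicity $d^\kappa$ is attained exactly, $(p_1)_d,\dots,(p_\kappa)_d$ must form a regular sequence, which completes the proof. Hilbert-series bookkeeping with Fr\"oberg's lemma by itself yields only the one-sided bound $HS_{\CC[x_1,\dots,x_\kappa]/\langle(p_1)_d,\dots,(p_\kappa)_d\rangle}(t)\geq(1+t+\dots+t^{d-1})^\kappa$, because both the natural lifting of relations from the full system to the initial-form ideal and Fr\"oberg's minimization push the quotient Hilbert series in the same direction; the matching upper bound required to force regularity therefore must come from the multiplicity inequality rather than from purely combinatorial Hilbert-series considerations.
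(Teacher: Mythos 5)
Your proof is correct, but it takes a genuinely different route from the paper's. Your reduction to $\kappa$ variables is sound: the pre-inflatable conditions give $\phi_j=O(\|x\|^2)$, substitution into any term of $p_i$ ($i\leq\kappa$) that involves an eliminated variable produces only terms of order greater than $d$, and the preceding lemma guarantees $(p_i)_d\neq 0$, so indeed $\mathrm{in}(\tilde p_i)=(p_i)_d$ and the local algebra, hence the multiplicity $d^\kappa$, is preserved. The classical fact you then invoke is genuine and citable; in the equal-order case needed here it is the chain $\dim_\CC R/\tilde I=e(\tilde I)\geq e(\mathfrak{m}^d)=d^\kappa$ for the parameter ideal $\tilde I\subseteq\mathfrak{m}^d$ in the regular (hence Cohen--Macaulay) local ring $R=\CC\{x_1,\dots,x_\kappa\}$, with the equality case handled by Rees's theorem ($\overline{\tilde I}=\mathfrak{m}^d$) and the valuative criterion applied to an arc $t\mapsto tv$ through a putative nonzero common zero $v$ of the initial forms. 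The paper instead stays in $n$ variables and argues from scratch: assuming a minimal-degree syzygy among $(p_1)_d,\dots,(p_\kappa)_d$ in degree $r$, it shows by explicitly rewriting representations $p=\sum q_ip_i$ (maximizing the minimal initial degree of the summands, then minimizing the top index attaining it) that every element of $\langle\cP_{d,d}\rangle$ with initial degree $r$ has its initial form in $\langle(p_1)_d,\dots,(p_\kappa)_d\rangle$, and then derives a contradiction by comparing the coefficient of $t^r$ in the assumed Hilbert series $(1+t+\dots+t^{d-1})^\kappa$ with the strictly larger coefficient forced by \cref{lem:FrobergLemma1}. Your approach buys brevity at the cost of importing Rees's theorem (or the intersection-theoretic multiplicity inequality); the paper's buys self-containedness and, as a byproduct, the identification of the initial ideal with $\langle(p_1)_d,\dots,(p_\kappa)_d\rangle$, which it reuses in the remark following the proof. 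One inaccuracy in your closing commentary: the matching upper bound does \emph{not} have to come from the multiplicity inequality --- the paper's syzygy-lifting argument supplies exactly the reverse inclusion, in the critical degree $r$, that your last paragraph asserts is unobtainable by Hilbert-series considerations.
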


	\begin{proof}
		Suppose that $(p_1)_d,\dots,(p_\kappa)_d$ do not form a regular sequence.  Let $r$ be the smallest degree where there exists $1<j\leq \kappa$ and homogeneous polynomials $m_1,\dots,m_j$ of degree $r-d$ such that $\sum_{i=1}^j m_i(p_i)_d=0$ and $m_j\not\in\langle (p_1)_d,\dots,(p_{j-1})_d\rangle$.  For all degrees $k$ less than $i$ and $1\leq \ell\leq \kappa$, the multiplication map 
		\begin{equation*}
			\left(k[x_1,\dots,x_n]/\langle (p_1)_d,\dots,(p_{\ell-1})_d\rangle\right)_{k-d}\xrightarrow
			{(p_{\ell-1})_d}\left(k[x_1,\dots,x_n]/\langle (p_1)_d,\dots,(p_{\ell})_d\right)_k
		\end{equation*}
		is injective.  Hence, the coefficient of $t^k$ in the Hilbert series for $\{(p_1)_d,\dots,(p_\kappa)_d\}$ agrees with the corresponding coefficient for a regular sequence.  In dimension $r$, this map is not always injective, so the coefficient of $t^r$ in the Hilbert series for $\{(p_1)_d,\dots,(p_\kappa)_d\}$ is larger than the coefficient of $t^r$ for a regular sequence.
		
		We now show that this also implies that the number of standard monomials of $\langle\cP_{d,d}\rangle$ in dimension $r$ contradicts the assumption on the Hilbert series.  Let $p\in\langle p_1,\dots,p_\kappa\rangle$ such that the initial degree of $p$ is $r$ and the initial form of $p$ is not in $\langle (p_1)_d,\dots,(p_{\kappa})_d\rangle$.  Since $p\in\langle p_1,\dots,p_\kappa\rangle$, $p=\sum q_ip_i$ for some polynomials $q_i$.  Suppose that the $q_i$'s have been chosen so that the minimum initial degree of $q_ip_i$ is maximized.  Let $m$ be this initial degree.  Moreover, assume that the $q_i$'s have been chosen so that the largest index where the initial degree of $q_ip_i$ is $m$ is minimized.  Let this index be $\ell$.
		
		Let $(q_i)_{m-d}$ denote the degree $m-d$ homogeneous part of $q_i$.  Since the initial degree of $q_ip_i$ is at least $m$, either $(q_i)_{m-d}=0$ or $(q_i)_{m-d}$ is the initial form of $q_i$.  In addition, $\sum_{i=1}^\ell (q_i)_{m-d}(p_i)_d=0$ since otherwise, this would be the initial form of $p$ and would also be in $\langle (p_1)_d,\dots,(p_{\kappa})_d\rangle$.  Moreover, the sum is not a sum of $0$'s since $(q_\ell)_{m-d}\not=0$.  Therefore, $m<r$ and so, by the assumption on $r$, $(q_\ell)_{m-d}\in\langle (p_1)_d,\dots,(p_{\ell-1})_d\rangle$.  Therefore, there exist homogeneous polynomials $s_1,\dots,s_{\ell-1}$ which are either $0$ or of degree $m-2d$ such that $(q_\ell)_{m-d}=\sum_{i=1}^{\ell-1} s_i(p_i)_d$.  Then,
		\ifthenelse{\mainfilecheck{1} > 0}{
			\begin{align*}
				\sum_{i=1}^\kappa q_ip_i&=\sum_{i=1}^{\ell-1} q_ip_i+q_\ell p_\ell+\sum_{i=\ell+1}^\kappa q_ip_i\\
				&=\sum_{i=1}^{\ell-1} q_ip_i+((q_\ell)_{m-d}+(q_\ell-(q_\ell)_{m-d}))p_\ell+\sum_{i=\ell+1}^\kappa q_ip_i\\
				&=\sum_{i=1}^{\ell-1} q_ip_i+\sum_{i=1}^{\ell-1} s_i(p_i)_dp_\ell+
				(q_\ell-(q_\ell)_{m-d})p_\ell+\sum_{i=\ell+1}^\kappa q_ip_i\\
				&=\sum_{i=1}^{\ell-1} q_ip_i+\sum_{i=1}^{\ell-1} s_i(p_i-(p_i-(p_i)_d))p_\ell+(q_\ell-(q_\ell)_{m-d})p_\ell+\sum_{i=\ell+1}^\kappa q_ip_i\\
				&=\sum_{i=1}^{\ell-1}(q_i+s_ip_\ell)p_i+\left(\sum_{i=1}^{\ell-1}s_i((p_i)_d-p_i)+(q_\ell-(q_\ell)_{m-d})\right)p_\ell+\sum_{i=\ell+1}^\kappa q_ip_i.
		\end{align*}}{\begin{align*}
				\sum_{i=1}^\kappa q_ip_i&=\sum_{i=1}^{\ell-1} q_ip_i+q_\ell p_\ell+\sum_{i=\ell+1}^\kappa q_ip_i\\
				&=\sum_{i=1}^{\ell-1} q_ip_i+((q_\ell)_{m-d}+(q_\ell-(q_\ell)_{m-d}))p_\ell+\sum_{i=\ell+1}^\kappa q_ip_i\\
				&=\sum_{i=1}^{\ell-1} q_ip_i+\sum_{i=1}^{\ell-1} s_i(p_i)_dp_\ell+
				(q_\ell-(q_\ell)_{m-d})p_\ell+\sum_{i=\ell+1}^\kappa q_ip_i\\
				&=\sum_{i=1}^{\ell-1} q_ip_i+\sum_{i=1}^{\ell-1} s_i(p_i-(p_i-(p_i)_d))p_\ell+(q_\ell-(q_\ell)_{m-d})p_\ell+\sum_{i=\ell+1}^\kappa q_ip_i\\
				&=\sum_{i=1}^{\ell-1}(q_i+s_ip_\ell)p_i\\
				&\quad+\left(\sum_{i=1}^{\ell-1}s_i((p_i)_d-p_i)+(q_\ell-(q_\ell)_{m-d})\right)p_\ell+\sum_{i=\ell+1}^\kappa q_ip_i.
		\end{align*}}
		The initial degree of $(p_i)_d-p_i$ is greater than $d$ and that of $q_\ell-(q_\ell)_{m-d}$ is greater than $m-d$ as well. We see that this violates the assumptions on $m$ and $\ell$.  In other words, either the minimum initial degree of a summand is larger or there are fewer terms that attain the degree $m$.  Hence, $(p_1)_d,\dots,(p_\kappa)_d$ form a regular sequence and only have finitely many common zeros in $\kappa$-dimensional affine space.  Therefore, they cannot vanish on the unit sphere $x_1,\dots,x_\kappa$, as, by homogeneity, this would imply that they vanish on a line.
	\end{proof}
	
	The proof of \cref{lem:regularsequence} implies that if the initial forms of $p_1,\dots,p_\kappa$ are a regular sequence, then the initial forms of $\langle P_{d,d}\rangle$ are the same as the forms in $\langle(p_1)_d,\dots,(p_\kappa)_d\rangle$.  Moreover, we can also conclude that if the Hilbert series for $\langle(p_1)_d,\dots,(p_\kappa)_d\rangle$ is $\frac{(1-t^d)^\kappa}{(1-t)^\kappa}$, then $(p_1)_d,\dots,(p_\kappa)_d$ form a regular sequence.
	
	\subsection{Application of Rouch\'e's theorem}
	Finally, we prove the consequence of Rouch\'e's theorem that we use to certify our algorithms.
	\begin{lemma}\label{lem:RoucheVersion}
		Let $\cP$ be a square polynomial system and $\cQ$ be a square homogeneous polynomial system of degree $d$.  Let $\bS_\varepsilon$ denote the $n$-dimensional (Hermitian) unit sphere of radius $\varepsilon$.  Suppose that
		\begin{enumerate}
			\item There is a positive constant $M$ such that $\min\{\|Q(x)\|:x\in\bS_1\}\geq M$ and
			\item There are constants $M_1$ and $M_2$ and a decomposition $\cP=\cP_1+\cP_2+\cQ$ such that for all $\varepsilon\leq 1$
			\begin{enumerate}
				\item $\max\{\|\cP_1(x)\|:x\in\bS_\varepsilon\}\leq M_1\varepsilon^{d+1}$.
				\item $\max\{\|\cP_2(x)\|:\|x\|\leq 1\}\leq M_2$
			\end{enumerate}
		\end{enumerate}
		If $\left(\frac{2M_2}{M}\right)^{1/d}<\frac{M}{2M_1}$, then for any $\varepsilon\in\left[\left(\frac{2M_2}{M}\right)^{1/d},\frac{M}{2M_1}\right]$, $\cP$ has $d^n$ zeros in the ball of radius $\varepsilon$.
	\end{lemma}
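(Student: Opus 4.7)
The plan is to apply the multivariate Rouché theorem cited earlier in the paper (\cite[Theorem 2.12]{Rouche:1993}) with $\cP$ playing the role of the perturbed map and $\cQ$ playing the role of the reference homogeneous map, on the domain $B_\varepsilon = \{\|x\| \le \varepsilon\}$ with boundary $\bS_\varepsilon$. The key is to verify the hypothesis $\|\cP(x) - \cQ(x)\| < \|\cQ(x)\|$ for $x \in \bS_\varepsilon$, and then to compute the zero count of $\cQ$ inside $B_\varepsilon$.

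First I would exploit homogeneity: since $\cQ$ is homogeneous of degree $d$, for any $x \in \bS_\varepsilon$ we can write $x = \varepsilon y$ with $y \in \bS_1$, so $\|\cQ(x)\| = \varepsilon^d \|\cQ(y)\| \ge M\varepsilon^d$ by hypothesis (1). On the other side, the decomposition $\cP - \cQ = \cP_1 + \cP_2$ combined with the triangle inequality and hypotheses (2a), (2b) gives, for $x \in \bS_\varepsilon$ with $\varepsilon \le 1$,
\[
\|\cP(x) - \cQ(x)\| \le \|\cP_1(x)\| + \|\cP_2(x)\| \le M_1 \varepsilon^{d+1} + M_2.
\]
Thus it suffices to produce $\varepsilon$ with $M_1 \varepsilon^{d+1} + M_2 < M\varepsilon^d$. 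Splitting this symmetrically in halves, I would require
\[
M_1 \varepsilon^{d+1} \le \tfrac{M}{2}\varepsilon^d \quad \text{and} \quad M_2 \le \tfrac{M}{2}\varepsilon^d,
\]
which are exactly $\varepsilon \le M/(2M_1)$ and $\varepsilon \ge (2M_2/M)^{1/d}$. The hypothesis $(2M_2/M)^{1/d} < M/(2M_1)$ guarantees this interval is nonempty, and every $\varepsilon$ in it satisfies the Rouché hypothesis strictly.

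Next I would count the zeros of $\cQ$ inside $B_\varepsilon$. Since $\cQ$ is a square homogeneous system of degree $d$ whose components have no common zero on $\bS_1$ (and hence no common nonzero zero at all, by homogeneity), the unique zero of $\cQ$ in $\CC^n$ is the origin, and by the homogeneous Bézout theorem (equivalently, the fact that $\{x_1^d, \dots, x_n^d\}$ is a generic template and $\cQ$ deforms to it through nonvanishing forms on $\bS_1$) this zero has multiplicity $d^n$. Therefore the total number of zeros of $\cQ$ in $B_\varepsilon$, counted with multiplicity, is $d^n$. Applying Rouché gives the same count for $\cP$.

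The only real subtlety is justifying the multiplicity $d^n$ for $\cQ$; the rest is a routine triangle-inequality balancing. In particular, I must make sure the version of Rouché cited handles multiplicities (it does, via the integrated logarithmic derivative / winding number of the associated map to $\bS^{2n-1}$), and that the strict inequality $\|\cP - \cQ\| < \|\cQ\|$ holds throughout $\bS_\varepsilon$ for each admissible $\varepsilon$, which is exactly what the two half-inequalities above provide.
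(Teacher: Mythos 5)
Your proof is correct and follows essentially the same route as the paper's: bound $\|\cQ\|$ from below by $M\varepsilon^d$ via homogeneity, bound $\|\cP-\cQ\|$ from above by $M_1\varepsilon^{d+1}+M_2$ via the triangle inequality, split the comparison into the two halves that define the interval for $\varepsilon$, count the $d^n$ zeros of $\cQ$ at the origin, and invoke the multivariate Rouch\'e theorem. Your added care about strictness of the inequality on the closed interval and the B\'ezout justification for the multiplicity $d^n$ are welcome details that the paper's terser proof glosses over, but they do not constitute a different argument.
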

	\begin{proof}
		The first condition implies that $Q$ has no zeros on the unit sphere, so all of its $d^n$ zeros are at the origin.  For $x$ satisfying the given conditions, 
		\ifthenelse{\mainfilecheck{1} > 0}{$$\|\cP(x)-\cQ(x)\|\leq \|\cP_1(x)\|+\|\cP_2(x)\|\\
			\leq M_1\varepsilon^{d+1}+M_2\leq M\varepsilon^d\leq\|\cQ(x)\|.$$}{\begin{multline*}\|\cP(x)-\cQ(x)\|\leq \|\cP_1(x)\|+\|\cP_2(x)\|\\
				\leq M_1\varepsilon^{d+1}+M_2\leq M\varepsilon^d\leq\|\cQ(x)\|.
		\end{multline*}}
		Then, by the multivariate version of Rouch\'e's theorem~\cite[Theorem 2.12]{Rouche:1993}, both $\cP$ and $\cQ$ have the same number of zeros in $\bS_\varepsilon$.
	\end{proof}
	
	\ifthenelse{\mainfilecheck{1} > 0}{\section*{acknowledgments}
		Burr was supported by 
		National Science Foundation
		grant
		DMS-1913119
		and
		Simons Foundation
		collaboration grant \#
		964285.
		Leykin was supported by 
		National Science Foundation
		grant
		DMS-2001267.
	}{\begin{acks}
			Burr was supported by 
			\grantsponsor{NSF}{National Science Foundation}{http://www.nsf.gov} grant
			\grantnum{NSF}{DMS-1913119} and
			\grantsponsor{SIMONS}{Simons Foundation}{http://www.simonsfoundation.org} collaboration grant \#\grantnum{SIMONS}{964285}.
			Leykin was supported by 
			\grantsponsor{NSF}{National Science Foundation}{http://www.nsf.gov} grant
			\grantnum{NSF}{DMS-2001267}.
		\end{acks}
	}

	\bibliographystyle{plain}
	\bibliography{generalizedinflation}


\end{document}